\numberwithin{equation}{section}
\newcommand{\be}{\begin{eqnarray}}
\newcommand{\ee}{\end{eqnarray}}
\newcommand{\ce}{\begin{eqnarray*}}
\newcommand{\de}{\end{eqnarray*}}
\newenvironment{proof of theorem t3}{{\it Proof of Theorem \ref{t3}}.}{{\hfill 	
$\square$\hskip - \parfillskip}}
\newenvironment{proof of theorem 1.2}{{\it Proof of Theorem 1.2}.}{{\hfill 	
		$\square$\hskip - \parfillskip}}
\newenvironment{proof of theorem 1.3}{{\it Proof of Theorem \ref{t2}}.}{{\hfill 	
		$\square$\hskip - \parfillskip}}
\newenvironment{proof of theorem 4.1}{{\it Proof of Theorem \ref{t4.1}}.}{{\hfill 	
			$\square$\hskip - \parfillskip}}
\newenvironment{proof of (1.13)}{{\it Proof of (\ref{5.9})}.}{{\hfill 	
		$\square$\hskip - \parfillskip}}
\newtheorem{theorem}{Theorem}[section]
\newtheorem{lemma}[theorem]{Lemma}
\newtheorem{conjecture}[theorem]{Conjecture}
\newtheorem{remark}[theorem]{Remark}
\newtheorem{definition}[theorem]{Definition}
\newtheorem{proposition}[theorem]{Proposition}
\newtheorem{Examples}[theorem]{Example}
\newtheorem{corollary}[theorem]{Corollary}
\newcommand{\rmnum}[1]{\romannumeral #1}
\newcommand{\Rmnum}[1]{\expandafter\@slowromancap\romannumeral #1@}
\def\eps{\varepsilon}
\def\e{\mathrm{e}}
\def\om{\omega}
\def\Om{\Omega}
\def\p{\partial}
\def\g{\gamma}
\def\l{\lambda}
\def\la{\langle}
\def\ra{\rangle}
\def\[{{\Big[}}
\def\]{{\Big]}}
\def\<{{\langle}}
\def\>{{\rangle}}
\def\({{\Big(}}
\def\){{\Big)}}
\def\bx{{\mathbf{x}}}
\def\min{{\mathord{{\rm min}}}}
\def\Vol{\mathord{{\rm Vol}}}
\def\={&\!\!=\!\!&}
\def\cL{{\mathcal L}}
\def\mH{{\mathbb H}}
\def\mK{{\mathbb K}}
\def\mR{{\mathbb R}}
\def\mS{{\mathbb S}}
\def\1{{\mathbf{1}}}
\def\sF{{\mathscr F}}
\def\geq{\geqslant}
\def\leq{\leqslant}
\def\ge{\geqslant}
\def\le{\leqslant}
\def\k{\kappa}
\def\eps{\varepsilon}
\def\e{\mathrm{e}}
\def\om{\omega}
\def\Om{\Omega}
\def\p{\partial}
\def\g{\gamma}
\def\l{\lambda}
\def\la{\langle}
\def\ra{\rangle}
\def\[{{\Big[}}
\def\]{{\Big]}}
\def\<{{\langle}}
\def\>{{\rangle}}
\def\({{\Big(}}
\def\){{\Big)}}
\def\bx{{\mathbf{x}}}
\def\min{{\mathord{{\rm min}}}}
\def\Vol{\mathord{{\rm Vol}}}
\def\={&\!\!=\!\!&}
\def\bt{\begin{theorem}}
\def\et{\end{theorem}}
\def\bl{\begin{lemma}}
\def\el{\end{lemma}}
\def\br{\begin{remark}}
\def\er{\end{remark}}
\def\bx{\begin{Examples}}
\def\ex{\end{Examples}}
\def\bd{\begin{definition}}
\def\ed{\end{definition}}
\def\bp{\begin{proposition}}
\def\ep{\end{proposition}}
\def\bc{\begin{corollary}}
\def\ec{\end{corollary}}
\def\geq{\geqslant}
\def\leq{\leqslant}
\def\ge{\geqslant}
\def\le{\leqslant}
 \def\nn{\nabla}
\def\<{\langle} \def\>{\rangle}
\def\bpf{\begin{proof}}
\def\epf{\end{proof}}
\begin{document}
	
\title{locally constrained flows and geometric inequalities in sphere}\thanks{\it {This research is partially supported by NSFC (Nos. 11871053 and 12261105).}}
\author{Shanwei Ding and Guanghan Li}

\thanks{{\it 2020 Mathematics Subject Classification: 53C44, 35K55.}}
\thanks{{\it Keywords: locally constrained flow, geometric inequalities, weighted curvature
integral, quermassintegral, curvature function}}
\thanks{\it data availability statement: Data sharing not applicable to this article as no datasets were generated or analysed during the current study.}

\address{School of Mathematics and Statistics, Wuhan University, Wuhan 430072, China.
}

\begin{abstract}
In this paper, we uncover an intriguing algebra property of an element symmetric polynomial. By this property, we establish the longtime existence and convergence of a locally constrained flow, thereby some families of geometric inequalities in sphere can be derived. Meanwhile, a new family of ``three terms'' geometric inequalities involving two weighted curvature
integrals and one quermassintegral are proved. Unlike hyperbolic spaces, we also obtain an inverse weighted geometric inequality in sphere.
\end{abstract}

\maketitle
\setcounter{tocdepth}{2}
\tableofcontents

\section{Introduction}
If $M\subset\mR^{n+1}$ is a convex hypersurface, the family of Alexandrov-Fenchel inequalities is the following,
\begin{align*}
\int_{M}p_k(\k)d\mu\ge\vert\mS^n\vert^\frac{1}{n-k+1}\(\int_{M}p_{k-1}(\k)d\mu\)^\frac{n-k}{n-k+1},
\end{align*}
where $p_k(\k)$ is defined as the normalized $k$th elementary symmetric function of the principal curvatures $\k=(\k_1,\cdots,\k_n)$ of $M$, $\vert\mS^n\vert$ denotes the area of $\mS^n$. Classically, these inequalities are raised in the convex body's theory thus convexity is used in an essential way. More details can be seen in \cite{SR}. Afterwards, Guan and Li \cite{GL2} made a pioneering work that they used a suitable inverse curvature flow to derive the same inequalities for any $k$-convex and star-shaped hypersurface. Thus
it is natural to generalize the Alexandrov-Fenchel inequalities to a larger class of smooth bounded domains in space form.

It is known that the space form $\mK^{n+1}$ can be viewed as Euclidean space $\mathbb{R}^{n+1}$ equipped with a metric tensor, i.e., $\mathbb{K}^{n+1}=(\mathbb{R}^{n+1},ds^2)$ with proper choice $ds^2$. More specifically, let $\mathbb S^n$ be the unit sphere in Euclidean space $\mathbb{R}^{n+1}$ with standard induced metric $dz^2$, then
\begin{equation*}
	\bar{g}:=ds^2=d\rho^2+\phi^2(\rho)dz^2,
\end{equation*}
where
\begin{equation*}
	\phi(\rho)=\begin{cases}
\sinh(\rho), &\rho\in[0,\infty), \qquad K=-1,\\
\rho, &\rho\in[0,\infty), \qquad K=0,\\
\sin(\rho), &\rho\in[0,\pi),\qquad K=1.
	\end{cases}
\end{equation*}
We define
\begin{equation*}
	\Phi(\rho)=\int_0^\rho\phi(s)ds=\begin{cases}
		\cosh(\rho)-1, \qquad &K=-1,\\
		\frac{\rho^2}{2},\qquad &K=0,\\
		1-\cos(\rho),\qquad &K=1.
	\end{cases}
\end{equation*}
Consider the vector field $V=\bar\nn\Phi=\phi(\rho)\frac{\p}{\p\rho}$ on $\mathbb{K}^{n+1}$. We know that $V$ is a conformal killing field, i.e.  $V$ satisfies $\bar{\nabla}_YV=\phi'(\rho)Y$ by \cite{GL}. We first review the definition of the $k$th quermassintegral $W_k$ in space form.
For any bounded domain $\Omega$ with smooth boundary $M=\p\Omega$ in the space form $\mathbb{K}^{n+1}$, the $k$th quermassintegral $W_k$ is defined as the measure of the set of totally geodesic $k$-dimensional subspaces which intersect $\Omega$. We have
\begin{equation*}
W_0(\Om)=\Vol(\Om), \quad W_1(\Om)=\Vol(\p\Om), \quad W_{n+1}(\Om)=\frac{\om_n}{n+1},
\end{equation*}
where $\om_n=\Vol(\mS^n)$. The quermassintegrals are related to the curvature integrals of $M$ by (\cite{SG})
\begin{equation}\label{r1}
	\begin{split}
&\int_{M}p_k(\k)d\mu=(n-k)W_{k+1}(\Om)-kKW_{k-1}(\Om), \quad k=1,\cdots,n-1,\\
&\int_{M}p_n(\k)d\mu=\om_n-nKW_{n-1}(\Om).
	\end{split}
\end{equation}
The quermassintegrals satisfy the following nice variational property:
\begin{equation*}
\frac{d}{dt}W_k(\Om_t)=\dfrac{n+1-k}{n+1}\int_{\p\Omega_t}\mathcal{F} p_k(\k)d\mu_t,\qquad 0\le k\le n
\end{equation*}
along any normal variation with speed $\mathcal{F}$. For the convenience of description, we give the following definitions.
\begin{definition}
A smooth bounded domain $\Om$ in space form $\mK^{n+1}$ is called $k$-convex (resp. strictly $k$-convex) if the principal curvatures of the boundary $\p\Om$ satisfy $\k\in\bar{\Gamma}^+_k$ (resp. $\k\in\Gamma^+_k$), where $\Gamma^+_k=\{\k\in\mR^n|p_m(\k)>0,\forall m\le k\}$;

A smooth bounded domain $\Om$ in space form $\mK^{n+1}$ is called convex (resp. strictly convex) if the principal curvatures of the boundary $\p\Om$ satisfy $\k\in\bar{\Gamma}^+_n$ (resp. $\k\in\Gamma^+_n$), i.e. $\k\in\{\forall i,\k_i\ge0\}$ (resp. $\k\in\{\forall i,\k_i>0\}$);

A smooth bounded domain $\Om$ in hyperbolic space $\mH^{n+1}$ is said to be $h$-convex (resp. strictly $h$-convex) if the principal curvatures of the boundary $\p\Om$ satisfy $\k_i\ge1$ (resp. $\k_i>1$) for all $i=1,\cdots,n$.
\end{definition}

As direct extensions of the Alexandrov-Fenchel inequalities in Euclidean space, the following Alexandrov-Fenchel type inequalities for $h$-convex domains in hyperbolic space were proved in
\cite{WX,HLW} ,
\begin{equation}\label{1.1}
W_k(\Om)\ge f_k\circ f_l^{-1}(W_l(\Om)),\quad 0\le l<k\le n.
\end{equation}
Equality holds if and only if $\Om$ is a geodesic ball. Here $f_k:[0,\infty)\rightarrow\mR^+$ is a monotone function defined by $f_k(r)=W_k(B_r)$, the $k$th quermassintegral for the geodesic ball of radius $r$, and $f_l^{-1}$ is the inverse function of $f_l$. 
If $\p\Om$ is not $h$-convex, there are also some results in \cite{LWX,HL,AHL,ACW} etc.. In $\mS^{n+1}$, there are only a few results for bounded convex domains, and we list \cite{BGL,CS,HL3,CGL} as references.
 
In addition, it can be found that the quermassintegrals, the curvature integrals and the weighted curvature integrals (such as $\int\phi'p_kd\mu$) differ only by a constant multiple in $\mR^{n+1}$. However, these three quantities differ greatly in hyperbolic space and sphere. Therefore
in addition to the quermassintegral inequalities, there is great interest to establish the weighted geometric inequalities in hyperbolic space and sphere, which can also be seen as an extension of another version of the Alexandrov-Fenchel inequalities in $\mR^{n+1}$. In \cite{BHW}, Brendle, Hung and Wang first initiated to prove weighted geometric inequalities involving $\int_{M}\phi'p_1d\mu$, $\Vol(M)$ and $\int_{M}ud\mu$ by applying curvature flows in hyperbolic space, and  a similar inequality was obtained in \cite{DG}.
Recently, \cite{HLW} and \cite{HL2} proved the following higher order weighted geometric inequalities for $h$-convex hypersurface and static convex hypersurface respectively in hyperbolic space,
\begin{equation}\label{x1.8}
\int_{M}\phi'p_kd\mu\ge h_k\circ f_l^{-1}(W_l(\Om)), \quad 1\le k\le n, 0\le l\le k.
\end{equation}
Equality holds if and only if $\Om$ is a geodesic ball. Here $h_k:[0,\infty)\rightarrow\mR^+$ is a monotone function defined by $h_k(r)=\om_n\sinh^{n-k}r\cosh^{k+1}r$, $\int_{M}\phi'p_kd\mu$ with $M=\p B_r$, and $h_k^{-1}$ is the inverse function of $h_k$. This raises two interesting questions: The first question is whether more sharp inequalities can be obtained, and the second question is whether two weighted curvature integrals can be compared. We shall give partial answers to these two questions in this paper.

For any bounded domain $\Om$ with smooth boundary $M=\p\Om$, we introduce the weighted curvature integral as follows:
\begin{align*}
	W_{-1}^{\phi'}(\Om)&=\int_{M}ud\mu=\int_\Om(n+1)\phi'd\text{vol},\quad W_{n}^{\phi'}(\Om)=\int_{M}\phi'p_nd\mu,\\
	W_{k-1}^{\phi'}(\Om)&=\int_{M}\phi'p_{k-1}d\mu=\int_{M}up_{k}d\mu,\quad k=1,\cdots,n.
\end{align*}
Along the outward normal variation with speed $\mathcal{F}$, by direct calculation, the weighted curvature integral evolves by (see \cite{HL2} Proposition 3.1)
\begin{equation}\label{1.6}
	\frac{d}{dt}W_{k-1}^{\phi'}(\Om_t)=\int_{M_t}(-Kkup_{k-1}+(n+1-k)\phi'p_k)\mathcal{F}d\mu_t,\quad k=0,\cdots,n+1,
\end{equation}
where we take $p_{-1}=p_{n+1}=0$ by convention. For these two questions, there have some partial answers.
For the second question, Hu and Li in \cite{HL2} derived
$$W_k^{\phi'}(\Om)\ge h_k\circ h_{-1}^{-1}(W_{-1}^{\phi'}(\Om))$$
for static convex hypersurfaces.
For the first question, a family of fascinating quantities $\int_{M}\Phi p_k(\k)d\mu$ in hyperbolic space and sphere have been considered. These quantities in $\mR^{n+1}$ have been studied widely. For instance, \cite{GR,KM,KM2} etc..
We hope to consider the following geometric inequalities.
\begin{equation}\label{x1.10}
\int_{M}\Phi p_kd\mu+kW_{k-1}(\Om)\ge (\xi_k+kf_{k-1})\circ f_l^{-1}(W_l(\Om)), \quad 1\le k\le n, 0\le l\le k.
\end{equation}
In \cite{KW}, Kwong and Wei proved the inequalities for strictly convex hypersurfaces in sphere with $k=n$ and static convex hypersurfaces in hyperbolic space with arbitrary $k$.

We give some motivations for considering (\ref{x1.10}). Firstly, by (\ref{r1}), the left hand side of (\ref{x1.10}) in hyperbolic space  can be written as $\int_{M}\phi' p_kd\mu-(n-k)W_{k+1}(\Om)$. This implies (\ref{x1.10}) is more sharp than the weighted geometric inequalities (\ref{x1.8}). Thus this gives an answer to the second question. Secondly, (\ref{x1.10})  has been extensively studied in $\mR^{n+1}$ and has many important applications. For example, in \cite{BFN}, a similar inequality with $k=0$ is used to prove some Weinstock type inequalities for the first nonzero Steklov and Wenzell eigenvalue for convex domains in $\mR^{n+1}$. Thirdly, the inequalities can be thought as a higher dimensional version of the results in \cite{KW}. In particular, (\ref{x1.10}) in sphere is an analogue of the conjecture in \cite{GP}. Finally, if we shall derive some inequalities by using geometric flows, we could only compare two geometric quantities usually since the way we take is to preserve one quantity while the another is monotone. Therefore this is very difficult if one wants to obtain inequalities involving more than two geometric quantities. This situation can be improved by constructing (\ref{x1.10}).

Firstly we shall derive the following theorem.
\begin{theorem}\label{t4.1}
	Assume $X_0(M)$ is the embedding of a closed, smooth, strictly convex $n$-dimensional manifold $M^n$ in $\mathbb{S}^{n+1}$. Let $F=p_2^\frac{1}{2}(\k_i)$,  then the flow 
	\begin{equation}\label{x1.7}
\frac{\p}{\p t}X=\(\phi'-uF\)\nu,
	\end{equation}
	 has a unique smooth, strictly convex solution $M_t$ for all time $t>0$, and the solution converges exponentially to a geodesic slice in the $C^\infty$-topology.
\end{theorem}
\textbf{Remark:} In \cite{GL}, Guan and Li have studied flow (\ref{x1.7}) with $F=p_1$. Since this is a difficult and unsolved problem if $F=\frac{p_{k+1}}{p_k}$, we give another extension of the result in \cite{GL} by considering $F=p_k^\frac{1}{k}(\k_i)$ as a substitute for $F=\frac{p_{k+1}}{p_k}$, which can also obtain many geometric inequalities. An example that using the curvature function $F=p_k^\frac{1}{k}(\k_i)$ can prove geometric inequalities can be seen in \cite{DL}.

In this paper, by the result of flows we can derive a new family of "three terms" geometric inequalities involving two weighted curvature integrals and one quermassintegral. To the best of our knowledge, there is no inequality involving two weighted integrals in sphere. We obtain such inequalities, which can be seen as an analogue of \cite{BHW} in sphere. We also derive some ``three terms" geometric inequalities involving one weighted curvature integrals and two quermassintegral.
\begin{theorem}\label{t5.7}
	Let $\Om$ be a strictly convex domain with smooth boundary $M$ in $\mS^{n+1}$. If $0\le k\le n$, there hold the following inequalities
	\begin{align}\label{5.9}
\int_{M}\Phi p_kd\mu+kW_{k-1}(\Om)\ge (\xi_k+kf_{k-1})\circ h_{-1}^{-1}(W_{-1}^{\phi'}(\Om)),
	\end{align}
and if $0\le k\le n-2$,
	\begin{align}\label{5.12}
\int_{M}\Phi p_kd\mu+kW_{k-1}(\Om)\ge (\xi_k+kf_{k-1})\circ f_{0}^{-1}(W_{0}(\Om)).
\end{align}
These	equalities hold if and only if $\Om$ is a geodesic ball centered at the origin.
\end{theorem}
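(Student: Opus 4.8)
The plan is to prove Theorem~\ref{t5.7} by running the locally constrained flows (\ref{x1.4}) and (\ref{x1.7}) in $\mS^{n+1}$ starting from a strictly convex domain, and combining the monotonicity of the relevant geometric quantities with the convergence to a geodesic ball. Concretely, I would first set $F=(p_k/p_{k-1})$ (or an appropriate power to make it degree one homogeneous) so that, along flow (\ref{x1.4}) with speed $\mathcal F=\frac1F-\frac{u}{\phi'}$, the quantity $\int_M\Phi p_kd\mu+kW_{k-1}(\Om)$ is monotone in one direction while the ``anchor'' quantity on the right of (\ref{5.9})--(\ref{5.12}) is preserved; similarly, flow (\ref{x1.7}) with speed $\mathcal F=\phi'-uF$ should preserve $\int_M\Phi p_kd\mu+kW_{k-1}(\Om)$ (using the Minkowski-type structure and the variational formulas analogous to (\ref{1.6}) and (\ref{r1})) while making $W_{-1}^{\phi'}$ or $W_0$ monotone. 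The key computational input is the evolution of $\int_M\Phi p_kd\mu+kW_{k-1}(\Om)$: using $\bar\nn\Phi=V$, the divergence structure of $\int_M\Phi p_kd\mu$, and (\ref{r1}) to rewrite $\int_M\Phi p_kd\mu$ in terms of weighted quermassintegrals, one shows $\frac{d}{dt}\big(\int_M\Phi p_kd\mu+kW_{k-1}\big)=(n+1-k)\int_{M_t}\phi'p_k\,\mathcal F\,d\mu_t$ up to the correct constant, so that the monotonicity reduces to the sign of $\mathcal F\cdot p_k$, which is controlled by the Newton--MacLaurin inequalities exactly as in the strictly convex case.

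Next I would invoke a convergence theorem for flows (\ref{x1.4}) and (\ref{x1.7}) in $\mS^{n+1}$ for strictly convex hypersurfaces: one needs that strict convexity is preserved (the remark after Theorem~\ref{t2} indicates this is the mechanism — a time-independent lower bound on $F$ together with an upper bound on $\k_i$ yields preserved convexity, hence $C^1$ estimates for free), and then $C^2$ and higher estimates follow by the standard maximum-principle/Krylov--Safonov machinery, giving exponential convergence to a geodesic slice (a geodesic ball centered at the origin). Along the flow, evaluating the preserved quantity at $t=0$ and at $t=\infty$ and using monotonicity of the other quantity gives the inequality with the functions $\xi_k$, $f_{k-1}$, $h_{-1}$, $f_0$ as defined: at the limit geodesic ball $B_r$ one has $\int_{\p B_r}\Phi p_kd\mu=\xi_k(r)$, $W_{k-1}(B_r)=f_{k-1}(r)$, $W_{-1}^{\phi'}(B_r)=h_{-1}(r)$, $W_0(B_r)=f_0(r)$, and all these are monotone in $r$, so composing the inverse with the direct function yields (\ref{5.9}) and (\ref{5.12}). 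The restriction $0\le k\le n-2$ in (\ref{5.12}) should arise because flow (\ref{x1.7}) only makes $W_0$ monotone with the right sign in that range (the coefficient $(n+1-k)$ versus the $K=1$ curvature terms), whereas flow (\ref{x1.4}) permits $W_{-1}^{\phi'}$ for all $0\le k\le n$.

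The rigidity statement follows from the equality case of the Newton--MacLaurin inequalities used in the monotonicity computation: equality forces $\mathcal F\equiv0$ at some time, i.e.\ the hypersurface is already a geodesic sphere, or it forces all principal curvatures equal along the flow, which by the ODE for the radius again identifies $\Om$ with a geodesic ball $B_r$; one checks that such balls do achieve equality by direct substitution.

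The main obstacle I expect is the convergence analysis of flows (\ref{x1.4}) and (\ref{x1.7}) in the sphere — specifically, preserving strict convexity and obtaining the a priori $C^2$ bounds. In $\mS^{n+1}$ the ambient curvature terms have the ``wrong'' sign compared to $\mH^{n+1}$, so the tensor maximum principle for the second fundamental form is more delicate; one must exploit the specific structure of the speeds $\frac1F-\frac{u}{\phi'}$ and $\phi'-uF$ (and the concavity/inverse-type condition $F^j_i(h^2)^i_j\ge F^2$ from Assumption~\ref{a1.1}) to close the estimates, and possibly restrict to $\rho<\pi/2$ to keep $\phi'>0$. Handling the boundary-of-cone behavior and ensuring the flow does not develop singularities in finite time is where the real work lies; the inequality itself is then a soft consequence of monotonicity plus convergence.
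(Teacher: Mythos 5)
Your overall scheme---run the locally constrained flows (\ref{x1.4}) and (\ref{x1.7}) in $\mS^{n+1}$, combine monotonicity with convergence to a geodesic slice, and compare endpoint values of $\xi_k+kf_{k-1}$, $h_{-1}$, $f_0$---is indeed the paper's scheme, and the convergence input you treat as the main obstacle is not the real issue: the paper simply invokes the known results of \cite{GL,CS,BGL} for flow (\ref{x1.7}) with $F=p_1$ and of \cite{SX} for flow (\ref{x1.4}) with $F=p_k/p_{k-1}$. The genuine gap is in your key computational input. The variational formula you assert, $\frac{d}{dt}\big(\int_M\Phi p_kd\mu+kW_{k-1}\big)=(n+1-k)\int\phi'p_k\,\mathcal{F}\,d\mu$, is false; the correct formula, quoted from \cite{WZ} as (\ref{x5.1}), is $\frac{d}{dt}\big(\int_{M_t}\Phi p_kd\mu+kW_{k-1}(\Om_t)\big)=\int_{M_t}\big((k+1)up_k+(n-k)\Phi p_{k+1}\big)\mathcal{F}\,d\mu_t$, and the two integrands already disagree on geodesic spheres. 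As a consequence your assignment of which quantity is preserved and which is monotone is also incorrect: along (\ref{x1.7}) with $F=p_1$ it is $W_0$ that is invariant (Minkowski formula), while $\int\Phi p_k+kW_{k-1}$ is only monotone decreasing, and only for $0\le k\le n-2$---this is the true source of the restriction in (\ref{5.12}), not a failure of the $W_0$-monotonicity as you guess; along (\ref{x1.4}) nothing is preserved at all: $W_{-1}^{\phi'}$ increases while $\int\Phi p_k+kW_{k-1}$ decreases, which still yields (\ref{5.9}) because the two monotonicities run in opposite directions.

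Moreover, your proposed sign mechanism---``the monotonicity reduces to the sign of $\mathcal{F}\cdot p_k$''---cannot work even in principle: by the Minkowski formula, $\int_{M_t}\phi'p_k\mathcal{F}\,d\mu_t=\int_{M_t}(\phi'p_{k-1}-up_k)\,d\mu_t=0$ along (\ref{x1.4}) with $F=p_k/p_{k-1}$, and $\int_{M_t}(\phi'-up_1)\,d\mu_t=0$ along (\ref{x1.7}), so the speed changes sign unless $M_t$ is already a slice, and no pointwise sign argument is available. The actual monotonicity proofs (Lemmas \ref{l5.6} and \ref{l5.7}) first apply Newton--MacLaurin to replace, e.g., $up_1p_{k+1}$ by $up_{k+2}$, then use the identity $p_m^{ij}\nabla_i\nabla_j\Phi=m(\phi'p_{m-1}-up_m)$, integration by parts against the divergence-free Newton tensors, the spherical relations $\nabla\phi'=-\nabla\Phi$ and $\phi'+\Phi=1$, and strict convexity to fix the sign of the resulting gradient terms. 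Without the correct variational formula and this integral argument, your monotonicity claims---and hence (\ref{5.9}), (\ref{5.12}) and the rigidity statement, which in the paper follows from the equality cases of precisely these integral inequalities---are not established.
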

\textbf{Remark:} If $k=1$, (\ref{5.9}) becomes
\begin{equation}\label{5.10}
	\int_{M}\Phi p_1d\mu+W_{0}(\Om)\ge (\xi_1+f_{0})\circ h_{-1}^{-1}(W_{-1}^{\phi'}(\Om)).
\end{equation}
 Brendle, Hung and Wang in \cite{BHW} proved $W_1^{\phi'}-W_{-1}^{\phi'}\ge(h_1-h_{-1})\circ f_1^{-1}(W_1(\Om))$ in hyperbolic space, i. e.
\begin{equation}\label{5.11}
	\int_{\p\Om}\Phi p_1d\mu+W_0+(n-1)W_2\ge W_{-1}^{\phi'}+(h_1-h_{-1})\circ f_1^{-1}(W_1(\Om)).
\end{equation}
If we have the inequality $W_2(\Om)\ge f_2\circ f_1^{-1}(W_1(\Om))$, by (\ref{5.10}), the similar inequality of (\ref{5.11}) will hold in sphere.

In \cite{HL2}, Hu and Li derive the following inequality in hyperbolic space:
$$W_{-1}^{\phi'}(\Om)\ge h_{-1}\circ f_0^{-1}(W_0(\Om)).$$
We derive a family of inverse inequalities in sphere, which is a surprising result.
We can also get the Alexandrov-Fenchel inequalities (\ref{5.15}), whose have been proved in \cite{CGL,CS}.
\begin{theorem}\label{t5.9}
Let $\Om$ be a star-shaped domain with smooth boundary $M$ in $\mS^{n+1}$. There holds
\begin{equation}
	W_{0}(\Om)\ge f_{0}\circ h_{-1}^{-1}(W_{-1}^{\phi'}(\Om)).
\end{equation}
Let $\Om$ be a strictly convex domain with smooth boundary $M$ in $\mS^{n+1}$. If $1\le m\le n$, there holds
	\begin{equation}\label{5.14}
		W_{m}(\Om)\ge f_{m}\circ h_{-1}^{-1}(W_{-1}^{\phi'}(\Om)),
	\end{equation}
and if $1\le m\le n-2$,
	\begin{equation}\label{5.15}
	W_{m}(\Om)\ge f_{m}\circ f_{0}^{-1}(W_{0}(\Om)).
\end{equation}
These equalities hold if and only if $\Om$ is a geodesic ball centered at the origin.
\end{theorem}
\textbf{Remark: } By using (\ref{5.15}), Chen and Sun derived $W_k\ge f_k\circ f_{k-2}^{-1}(W_{k-2})$ in \cite{CS}.

To prove these inequalities, we shall study (\ref{x1.7}) in sphere by technical reasons. (\ref{x1.7}) with $F=p_1$ has been investigated in \cite{GL} and \cite{CGL}. However, the $C^2$ estimates of (\ref{x1.7}) is still missing if $F\ne p_1$. We pick a suitable curvature function and discover a special algebraic property of this curvature function to derive the flow's result. Then these inequalities can be derived by the monotone quantities along this flow and the convergence of this flow.

The rest of this paper is organized as follows. We first recall some notations and known results in Section 2 for later use. In Section 3, we give the motivations for considering (\ref{x1.7}). We also derive an interesting algebra property, which we hope that might be useful to study other problems. Meanwhile, we derive the results of flow (\ref{x1.7}) in sphere by this interesting algebra property. In Section 4, we obtain some monotone quantities along the flows (\ref{x1.4}) and (\ref{x1.7}) respectively. As application, the inequalities in main theorems are proved. Finally, we raise some questions that could be considered.

\section{Preliminary}
\subsection{Intrinsic curvature}
We now state some general facts about hypersurfaces, especially those that can be written as graphs. The geometric quantities of ambient spaces will be denoted by $(\bar{g}_{\alpha\beta})$, $(\bar{R}_{\alpha\beta\gamma\delta})$ etc, where Greek indices range from $0$ to $n$. Quantities for $M$ will be denoted by $(g_{ij})$, $(R_{ijkl})$ etc., where Latin indices range from $1$ to $n$.

Let $\nabla$, $\bar\nabla$ and $D$ be the Levi-Civita connection of $g$, $\bar g$ and the Riemannian metric $e$ of $\mathbb S^n$  respectively. All indices appearing after the semicolon indicate covariant derivatives. The $(1,3)$-type Riemannian curvature tensor is defined by
\begin{equation}\label{2.1}
	R(U,Y)Z=\nabla_U\nabla_YZ-\nabla_Y\nabla_UZ-\nabla_{[U,Y]}Z,
\end{equation}
or with respect to a local frame $(e_i)$,
\begin{equation}\label{2.2}
	R(e_i,e_j)e_k={R_{ijk}}^{l}e_l,
\end{equation}
where we use the summation convention (and will henceforth do so). The coordinate expression of (\ref{2.1}), the so-called Ricci identities, read
\begin{equation}\label{2.3}
	Y_{;ij}^k-Y_{;ji}^k=-{R_{ijm}}^kY^m
\end{equation}
for all vector fields $Y=(Y^k)$. We also denote the $(0,4)$ version of the curvature tensor by $R$,
\begin{equation}\label{2.4}
	R(W,U,Y,Z)=g(R(W,U)Y,Z).
\end{equation}
\subsection{Extrinsic curvature}
For a hypersurface $X: M\longrightarrow\mathbb{K}^{n+1}$, the induced geometry of $M$ is governed by the following relations. The second fundamental form $h=(h_{ij})$ is given by the Gaussian formula
\begin{equation}\label{2.5}
	\bar\nabla_ZY=\nabla_ZY-h(Z,Y)\nu,
\end{equation}
where $\nu$ is a local outer unit normal field. Note that here (and in the rest of the paper) we will abuse notation by disregarding the necessity to distinguish between a vector $Y\in T_pM$ and its push-forward $X_*Y\in T_p\mathbb{K}^{n+1}$. The Weingarten endomorphism $A=(h_j^i)$ is given by $h_j^i=g^{ki}h_{kj}$, and the Weingarten formula
\begin{equation}\label{2.6}
	\bar\nabla_Y\nu=A(Y),
\end{equation}
holds there, or in coordinates
\begin{equation}\label{2.7}
	\nu_{;i}^\alpha=h_i^kX_{;k}^\alpha.
\end{equation}
We also have the Codazzi equation in $\mathbb{K}^{n+1}$
\begin{equation}\label{2.8}
	\nabla_Wh(Y,Z)-\nabla_Zh(Y,W)=-\bar{R}(\nu,Y,Z,W)=0
\end{equation}
or
\begin{equation}\label{2.9}
	h_{ij;k}-h_{ik;j}=-\bar R_{\alpha\beta\gamma\delta}\nu^\alpha X_{;i}^\beta X_{;j}^\gamma X_{;k}^\delta=0,
\end{equation}
and the Gauss equation
\begin{equation}\label{2.10}
	R(W,U,Y,Z)=\bar{R}(W,U,Y,Z)+h(W,Z)h(U,Y)-h(W,Y)h(U,Z)
\end{equation}
or
\begin{equation}\label{2.11}
	R_{ijkl}=\bar{R}_{\alpha\beta\gamma\delta}X_{;i}^\alpha X_{;j}^\beta X_{;k}^\gamma X_{;l}^\delta+h_{il}h_{jk}-h_{ik}h_{jl},
\end{equation}
where
\begin{equation}\label{2.12}
	\bar{R}_{\alpha\beta\gamma\delta}=-K(\bar{g}_{\alpha\gamma}\bar{g}_{\beta\delta}-\bar{g}_{\alpha\delta}\bar{g}_{\beta\gamma}).
\end{equation}
We give the gradient and hessian of the support function $u$ under the induced metric $g$ on $M$.
\begin{lemma}\label{l2.2}
	The support function $u$ satisfies
	\begin{equation}\label{2.13}
		\begin{split}
			\nabla_iu=&g^{kl}h_{ik}\nabla_l\Phi,  \\
			\nabla_i\nabla_ju=&g^{kl}\nabla_kh_{ij}\nabla_l\Phi+\phi'h_{ij}-(h^2)_{ij}u,
		\end{split}
	\end{equation}
	where $(h^2)_{ij}=g^{kl}h_{ik}h_{jl}.$
	$\Phi$ satisfies
	\begin{equation*}
		\nn_i\nn_j\Phi=\phi'g_{ij}-h_{ij}u.
	\end{equation*}
\end{lemma}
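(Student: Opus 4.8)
The plan is to derive all three identities directly from the conformal Killing property $\bar\nabla_Y V=\phi'(\rho)Y$ of $V=\bar\nabla\Phi$, together with the Gauss formula \eqref{2.5}, the Weingarten formula \eqref{2.6}--\eqref{2.7}, and the Codazzi equation \eqref{2.9}. Write $e_i=X_{;i}$ for a local frame on $M$. Since $\Phi$ is a function on the ambient space, its restriction to $M$ has gradient $\nabla_i\Phi=e_i(\Phi)=\langle\bar\nabla\Phi,e_i\rangle=\langle V,e_i\rangle$, while by definition $u=\langle V,\nu\rangle$. Everything then reduces to differentiating these two inner products along $M$ and sorting the tangential and normal contributions using the structure equations.

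First I would compute the Hessian of $\Phi$ on $M$. Differentiating $\nabla_j\Phi=\langle V,e_j\rangle$ along $e_i$ and using $\bar\nabla_{e_i}V=\phi'e_i$ together with the Gauss formula $\bar\nabla_{e_i}e_j=\nabla_{e_i}e_j-h_{ij}\nu$ gives $e_i\langle V,e_j\rangle=\phi'g_{ij}+\langle V,\nabla_{e_i}e_j\rangle-h_{ij}u$. Subtracting the Christoffel correction $\langle V,\nabla_{e_i}e_j\rangle=\Gamma^k_{ij}\nabla_k\Phi$ needed to form the covariant Hessian yields $\nabla_i\nabla_j\Phi=\phi'g_{ij}-uh_{ij}$, which is the last identity of the lemma.

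Next, for the gradient of $u$, differentiating $u=\langle V,\nu\rangle$ along $e_i$ and using $\langle\bar\nabla_{e_i}V,\nu\rangle=\phi'\langle e_i,\nu\rangle=0$ with the Weingarten formula $\bar\nabla_{e_i}\nu=h_i^k e_k$ gives $\nabla_i u=h_i^k\langle V,e_k\rangle=h_i^k\nabla_k\Phi=g^{kl}h_{ik}\nabla_l\Phi$. Finally, for the Hessian of $u$, I would differentiate this gradient formula covariantly, obtaining $\nabla_i\nabla_j u=g^{kl}(\nabla_i h_{jk})\nabla_l\Phi+g^{kl}h_{jk}\nabla_i\nabla_l\Phi$; then apply the Codazzi equation \eqref{2.9}, which in the space form reads $\nabla_i h_{jk}=\nabla_k h_{ij}$, and substitute the already-proved identity $\nabla_i\nabla_l\Phi=\phi'g_{il}-uh_{il}$. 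This gives $\nabla_i\nabla_j u=g^{kl}\nabla_k h_{ij}\nabla_l\Phi+\phi'h_{ij}-u(h^2)_{ij}$ with $(h^2)_{ij}=g^{kl}h_{ik}h_{jl}$, as claimed.

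There is no substantial obstacle here: the computation is a routine application of the structure equations. The only points requiring care are the consistent use of the sign conventions in \eqref{2.5}--\eqref{2.7} (in particular the $-h_{ij}\nu$ in the Gauss formula) and the bookkeeping of the Christoffel correction when passing from the plain second derivative of a function to its covariant Hessian; the symmetry of the Codazzi tensor in the space form is what makes the mixed term in $\nabla^2 u$ take the stated intrinsic form.
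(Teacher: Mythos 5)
Your computation is correct and complete: the conformal Killing property $\bar\nabla_YV=\phi'Y$ combined with the Gauss, Weingarten and Codazzi equations yields all three identities exactly as you derive them. The paper itself omits the proof and simply cites \cite{GL,BLO,JL}, where this same standard argument is carried out, so your proposal matches the intended approach.
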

The proof of Lemma \ref{l2.2} can be seen in \cite{GL,BLO,JL}. Let $M(t)$ be a smooth family of closed hypersurfaces in $\mK^{n+1}$. Let $X(\cdot,t)$ denote a point on $M(t)$. In general, we have the following evolution property.
\begin{lemma}\label{l3.2}
	Let $M(t)$ be a smooth family of closed hypersurfaces in $\mK^{n+1}$ evolving along the flow
\begin{equation}\label{2.131}
\p_tX=\mathscr{F}\nu,
\end{equation}
	where $\nu$ is the unit outward normal vector field and $\mathscr{F}$ is a function defined on $M(t)$. Then we have the following evolution equations.
	\begin{equation}\label{3.4}
		\begin{split}
			\p_tg_{ij}&=2\mathscr{F}h_{ij},\\
			\p_t\nu&=-\nn\mathscr{F},\\
			\p_td\mu_g&=\mathscr{F}Hd\mu_g,\\
			\p_th_{ij}&=-\nn_i\nn_j\mathscr{F}+\mathscr{F}(h^2)_{ij}-K\mathscr{F}g_{ij},\\
			\p_tu&=\phi'\mathscr{F}-<\nn \Phi,\nn\mathscr{F}>,
		\end{split}
	\end{equation}
	where $d\mu_g$ is the volume element of the metric $g(t)$.
\end{lemma}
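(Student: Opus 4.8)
All five identities are obtained by direct differentiation of the defining expressions, using only the structure equations \eqref{2.5}--\eqref{2.6}, the Ricci-type commutation of covariant derivatives, and the explicit form \eqref{2.12} of the ambient curvature. The plan is to compute them in the order listed, since the later ones use the earlier ones. The single preliminary fact needed is the commutation $\bar\nabla_{\partial_t}\partial_iX=\bar\nabla_{\partial_i}\partial_tX=\bar\nabla_i(\mathscr F\nu)=(\nabla_i\mathscr F)\nu+\mathscr F\,h_i^k\partial_kX$, where the last step uses the Weingarten formula \eqref{2.6}.

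First I would treat the easy identities. Contracting the displayed formula for $\bar\nabla_{\partial_t}\partial_iX$ with $\partial_jX$ and symmetrizing in $i,j$ gives $\partial_tg_{ij}=2\mathscr F h_{ij}$ immediately, and then $\partial_td\mu_g=\tfrac12 g^{ij}(\partial_tg_{ij})\,d\mu_g=\mathscr F H\,d\mu_g$ from $d\mu_g=\sqrt{\det g}\,dx$. For $\partial_t\nu$, differentiating $\langle\nu,\nu\rangle=1$ shows $\partial_t\nu$ is tangential, and differentiating $\langle\nu,\partial_iX\rangle=0$ gives $\langle\partial_t\nu,\partial_iX\rangle=-\langle\nu,\partial_i(\mathscr F\nu)\rangle=-\nabla_i\mathscr F$, so $\partial_t\nu=-\nabla\mathscr F$.

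The main computation is the evolution of $h_{ij}$. Writing $h_{ij}=-\langle\bar\nabla_i\partial_jX,\nu\rangle$ via the Gauss formula \eqref{2.5} and differentiating in $t$, I split $\partial_th_{ij}=-\langle\bar\nabla_{\partial_t}\bar\nabla_i\partial_jX,\nu\rangle-\langle\bar\nabla_i\partial_jX,\partial_t\nu\rangle$. Using $\partial_t\nu=-\nabla\mathscr F$ and \eqref{2.5} again, the second term equals the Christoffel correction $\Gamma_{ij}^k\nabla_k\mathscr F$. For the first term I commute $\bar\nabla_{\partial_t}$ past $\bar\nabla_i$, which produces the ambient curvature term $\bar R(\partial_tX,\partial_iX)\partial_jX=\mathscr F\,\bar R(\nu,\partial_iX)\partial_jX$; pairing with $\nu$ and inserting \eqref{2.12} yields $-K\mathscr F g_{ij}$. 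The remaining piece $-\langle\bar\nabla_i\bar\nabla_j(\mathscr F\nu),\nu\rangle$ is expanded using \eqref{2.5}--\eqref{2.6}, and its normal component contributes $-\partial_i\partial_j\mathscr F+\mathscr F(h^2)_{ij}$. Collecting everything, $-\partial_i\partial_j\mathscr F$ combines with the Christoffel term from the second piece into the induced Hessian $-\nabla_i\nabla_j\mathscr F$, giving $\partial_th_{ij}=-\nabla_i\nabla_j\mathscr F+\mathscr F(h^2)_{ij}-K\mathscr F g_{ij}$.

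Finally, for $u=\langle V,\nu\rangle$ with $V=\bar\nabla\Phi$, I write $\partial_tu=\langle\bar\nabla_{\partial_tX}V,\nu\rangle+\langle V,\partial_t\nu\rangle$. The conformal Killing property $\bar\nabla_YV=\phi'Y$ gives $\bar\nabla_{\mathscr F\nu}V=\mathscr F\phi'\nu$, so the first term is $\phi'\mathscr F$; since $\partial_t\nu=-\nabla\mathscr F$ is tangential and the tangential part of $V$ is $\nabla\Phi$, the second term is $-\langle\nabla\Phi,\nabla\mathscr F\rangle$, which is the claimed identity. I expect the bookkeeping in the $h_{ij}$ step---in particular tracking the Christoffel symbols so that the ordinary second derivatives of $\mathscr F$ reassemble into the tensorial Hessian $\nabla_i\nabla_j\mathscr F$, and getting the sign of the curvature term right from \eqref{2.12} and the convention \eqref{2.4}---to be the only delicate point; the rest is routine verification.
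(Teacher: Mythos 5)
Your derivation is correct, with all signs consistent with the paper's conventions \eqref{2.5}, \eqref{2.6}, \eqref{2.4} and \eqref{2.12} (the curvature term $-K\mathscr F g_{ij}$ and the combination of $-\partial_i\partial_j\mathscr F$ with the Christoffel term into $-\nabla_i\nabla_j\mathscr F$ both check out, e.g.\ against an expanding geodesic sphere). The paper itself gives no computation here — it simply cites the standard reference \cite{HG} — and your argument is exactly that standard computation, so there is nothing to reconcile.
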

\begin{proof}
	Proof is standard, see for example, \cite{HG}.
\end{proof}
\subsection{Graphs in $\mathbb{K}^{n+1}$}
Let $(M,g)$ be a hypersurface in $\mathbb{K}^{n+1}$ with induced metric $g$. We now give the local expressions of the induced metric, second fundamental form, Weingarten curvatures etc when $M$ is a graph of a smooth and positive function $\rho(z)$ on $\mathbb{S}^n$. Let $\p_1,\cdots,\p_n$ be a local frame along $M$ and $\p_\rho$ be the vector field along radial direction. Then the support function, induced metric, inverse metric matrix, second fundamental form can be expressed as follows (\cite{GL}).
\begin{align*}
	u &= \frac{\phi^2}{\sqrt{\phi^2+|D\rho|^2}},\;\; \nu=\frac{1}{\sqrt{1+\phi^{-2}|D\rho|^2}}(\frac{\p}{\p\rho}-\phi^{-2}\rho_i\frac{\p}{\p x_i}),   \\
	g_{ij} &= \phi^2e_{ij}+\rho_i\rho_j,  \;\;   g^{ij}=\frac{1}{\phi^2}(e^{ij}-\frac{\rho^i\rho^j}{\phi^2+|D\rho|^2}),\\
	h_{ij} &=\(\sqrt{\phi^2+|D\rho|^2}\)^{-1}(-\phi D_iD_j\rho+2\phi'\rho_i\rho_j+\phi^2\phi'e_{ij}),\\
	h^i_j &=\frac{1}{\phi^2\sqrt{\phi^2+|D\rho|^2}}(e^{ik}-\frac{\rho^i\rho^k}{\phi^2+|D\rho|^2})(-\phi D_kD_j\rho+2\phi'\rho_k\rho_j+\phi^2\phi'e_{kj}),
\end{align*}
where $e_{ij}$ is the standard spherical metric. It will be convenient if we introduce a new variable $\gamma$ satisfying $$\frac{d\gamma}{d\rho}=\frac{1}{\phi(\rho)}.$$
Let $\omega:=\sqrt{1+|D\gamma|^2}$, one can compute the unit outward normal $$\nu=\frac{1}{\omega}(1,-\frac{\gamma_1}{\phi},\cdots,-\frac{\gamma_n}{\phi})$$ and the general support function $u=\la V,\nu\ra=\frac{\phi}{\omega}$. Moreover,
\begin{align}\label{2.14}
	g_{ij} &=\phi^2(e_{ij}+\gamma_i\gamma_j), \;\; g^{ij}=\frac{1}{\phi^2}(e^{ij}-\frac{\gamma^i\gamma^j}{\omega^2}),\notag\\
	h_{ij} &=\frac{\phi}{\omega}(-\gamma_{ij}+\phi'\gamma_i\gamma_j+\phi'e_{ij}),\notag\\
	h^i_j &=\frac{1}{\phi\omega}(e^{ik}-\frac{\gamma^i\gamma^k}{\omega^2})(-\gamma_{kj}+\phi'\gamma_k\gamma_j+\phi'e_{kj})\notag\\
	&=\frac{1}{\phi\omega}(\phi'\delta^i_j-(e^{ik}-\frac{\gamma^i\gamma^k}{\omega^2})\gamma_{kj}).
\end{align}
Covariant differentiation with respect to the spherical metric is denoted by indices.

There is also a relation between the second fundamental form and the radial function on the hypersurface. Let $\widetilde{h}=\phi'\phi e$. Then
\begin{equation}\label{2.15}
	\omega^{-1}h=-\nn^2\rho+\widetilde{h}
\end{equation}
holds; cf. \cite{GC2}. Since the induced metric is given by
$$g_{ij}=\phi^2 e_{ij}+\rho_i\rho_j, $$
we obtain
\begin{equation}\label{2.16}
	\omega^{-1}h_{ij}=-\rho_{;ij}+\frac{\phi'}{\phi}g_{ij}-\frac{\phi'}{\phi}\rho_{i}\rho_{j}.
\end{equation}

It is known (\cite{GC2}) if a closed hypersurface in $\mathbb{K}^{n+1}$ which is a radial graph over $\mathbb{S}^n$ and satisfies the flow equation (\ref{2.131}),
then the evolution of the scalar function $\rho=\rho(X(z,t),t)$ satisfies
$$\p_t\rho=\mathscr{F}\omega.$$
Lastly, we have the connection between $\vert\nn\rho\vert$ and $\vert D\gamma\vert$.
\begin{lemma}\label{l2.3} \cite{DL}
	If $M$ is a star-shaped hypersurface, there holds $\vert\nn\rho\vert^2=1-\frac{1}{\omega^2}$.
\end{lemma}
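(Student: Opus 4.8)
\textbf{Proof proposal for Lemma \ref{l2.3}.}

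The plan is to compute $|\nabla\rho|^2$, the squared norm of the gradient of the radial function $\rho$ taken with respect to the induced metric $g$ on the star-shaped hypersurface $M$, and to compare it with $|D\gamma|^2$, which appears in the definition $\omega = \sqrt{1+|D\gamma|^2}$. Here $D$ is the connection of the round metric $e$ on $\mathbb{S}^n$, and $\gamma$ is the antiderivative of $1/\phi$ introduced earlier, so that $\gamma_i = D_i\gamma = \rho_i/\phi(\rho)$ and hence $|D\gamma|_e^2 = e^{ij}\gamma_i\gamma_j = \phi^{-2}e^{ij}\rho_i\rho_j = \phi^{-2}|D\rho|_e^2$. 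Thus $\omega^2 = 1 + \phi^{-2}|D\rho|_e^2$, and equivalently $|D\rho|_e^2 = \phi^2(\omega^2-1)$.

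Next I would use the expression for the inverse induced metric on a radial graph, namely $g^{ij} = \frac{1}{\phi^2}\big(e^{ij} - \frac{\rho^i\rho^j}{\phi^2+|D\rho|^2}\big)$ from the list of graph formulas in Section 2 (with $|D\rho|^2 = |D\rho|_e^2$ and $\rho^i = e^{ik}\rho_k$). Since $\rho$ is a function on $\mathbb{S}^n$ pulled back to $M$, its differential is $d\rho = \rho_i\,dx^i$ and its induced gradient norm is $|\nabla\rho|^2 = g^{ij}\rho_i\rho_j$. Substituting,
\begin{equation*}
|\nabla\rho|^2 = \frac{1}{\phi^2}\Big(e^{ij}\rho_i\rho_j - \frac{(e^{ik}\rho_i\rho_k)^2}{\phi^2+|D\rho|_e^2}\Big) = \frac{1}{\phi^2}\Big(|D\rho|_e^2 - \frac{|D\rho|_e^4}{\phi^2+|D\rho|_e^2}\Big) = \frac{|D\rho|_e^2}{\phi^2+|D\rho|_e^2}.
\end{equation*}
Now I plug in $|D\rho|_e^2 = \phi^2(\omega^2-1)$ from the previous paragraph: the right-hand side becomes $\frac{\phi^2(\omega^2-1)}{\phi^2 + \phi^2(\omega^2-1)} = \frac{\omega^2-1}{\omega^2} = 1 - \frac{1}{\omega^2}$, which is the claimed identity.

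This argument is essentially a bookkeeping exercise assembling the graph formulas already recorded in Section 2, so there is no serious obstacle; the only points requiring care are the consistent use of the round metric $e$ for raising indices and for measuring $|D\rho|$ and $|D\gamma|$, and the relation $\gamma_i = \rho_i/\phi$ coming from $d\gamma/d\rho = 1/\phi$. Alternatively, one can shortcut the computation by noting $u = \la V,\nu\ra = \phi/\omega$ together with $|V|^2 = \phi^2$ and the orthogonal decomposition of $V = \phi\,\partial_\rho$ into its tangential and normal parts along $M$: the normal part has length $u$, so the tangential part has squared length $\phi^2 - u^2 = \phi^2(1-\omega^{-2})$, while that tangential part is exactly $\phi^2\nabla\rho$ (since $\nabla\rho$ is the tangential projection of $\partial_\rho = \bar\nabla\rho$ and $V=\phi\,\partial_\rho$ is conformal to $\bar\nabla\Phi$), giving $\phi^4|\nabla\rho|^2 = \phi^2(1-\omega^{-2})$ — but wait, this needs the normalization $|\nabla\rho|^2$ rather than $\phi^4|\nabla\rho|^2$, so one should instead project $\bar\nabla\rho = \phi^{-1}V$ directly: its tangential part is $\nabla\rho$, its normal part is $\la\bar\nabla\rho,\nu\ra\nu = (u/\phi)\nu$, and $|\bar\nabla\rho|_{\bar g}^2 = 1$ since $\bar g = d\rho^2 + \phi^2 dz^2$, yielding $|\nabla\rho|^2 = 1 - u^2/\phi^2 = 1 - 1/\omega^2$ immediately. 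I would present the first (direct graph-formula) computation as the main proof and perhaps mention the second as a remark.
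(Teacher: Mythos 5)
Your computation is correct. The paper itself offers no proof of this lemma (it is quoted from \cite{DL}), and your main argument is exactly the standard verification: from $d\gamma/d\rho=1/\phi$ one gets $|D\gamma|^2=\phi^{-2}|D\rho|_e^2$, hence $|D\rho|_e^2=\phi^2(\omega^2-1)$, and contracting the graph formula $g^{ij}=\phi^{-2}\bigl(e^{ij}-\rho^i\rho^j/(\phi^2+|D\rho|_e^2)\bigr)$ with $\rho_i\rho_j$ gives $|\nabla\rho|^2=|D\rho|_e^2/(\phi^2+|D\rho|_e^2)=1-\omega^{-2}$. One editorial remark: the second argument you sketch at the end is actually the cleanest route and could replace the index computation as the main proof --- since $\bar g=d\rho^2+\phi^2dz^2$, the ambient gradient $\bar\nabla\rho=\partial_\rho$ has unit length, its normal component along $M$ is $\langle\partial_\rho,\nu\rangle=u/\phi=1/\omega$, and its tangential projection is $\nabla\rho$, so $|\nabla\rho|^2=1-\omega^{-2}$ immediately; the detour through $V=\phi\,\partial_\rho$ (the ``but wait'' passage) should simply be deleted, as you yourself correct it in the same sentence.
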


\subsection{Elementary symmetric functions}
We review some properties of elementary symmetric functions. See \cite{HGC} for more details.

In Section 1 we give the definition of elementary symmetric functions. The definition can be extended to symmetric matrices. Let $A\in Sym(n)$ be an $n\times n$ symmetric matrix. Denote by $\k=\k(A)$ the eigenvalues of $A$. Set $p_m(A)=p_m(\k(A))$. We have
\begin{equation*}
	p_m(A)=\frac{(n-m)!}{n!}\delta^{j_1\cdots j_m}_{i_1\cdots i_m}A_{j_1}^{i_1}\cdots A_{j_m}^{i_m}, \qquad m=1,\cdots,n,
\end{equation*}
where $\delta_{i_1\cdots i_m}^{j_1\cdots j_m}$ is generalized Kronecker delta defined by
$$\delta_{i_1\cdots i_m}^{j_1\cdots j_m}=\det\(\(\delta_{i_k}^{j_l}\)_{m\times m}\), \qquad 1\le k,l,i_k,j_l\le m.$$
\begin{lemma}\label{l2.4}
	Denote $(p_m)^j_i=\frac{\p p_m}{\p A_{j}^{i}}$. Then we have
	\begin{align}
		(p_m)^j_iA_{j}^{i}&=mp_m,\\
		(p_m)^j_i\delta_j^i&=mp_{m-1},\\
		(p_m)^j_i(A^2)_{j}^{i}&=np_1p_m-(n-m)p_{m+1},
	\end{align}
	where $(A^2)_{j}^{i}=\sum_{k=1}^nA_{j}^{k}A_{k}^i$.
\end{lemma}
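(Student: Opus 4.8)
The plan is to reduce everything to elementary identities for symmetric functions of the eigenvalues. The first identity is just Euler's relation: the defining formula $p_m(A)=\frac{(n-m)!}{n!}\delta^{j_1\cdots j_m}_{i_1\cdots i_m}A_{j_1}^{i_1}\cdots A_{j_m}^{i_m}$ displays $p_m$ as a homogeneous polynomial of degree $m$ in the entries $(A^i_j)$, so $(p_m)^j_iA^i_j=\sum_{a,b}A^a_b\,\partial p_m/\partial A^a_b=m\,p_m$ with no further work.

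For the second and third identities I would first note that both sides of each are invariant under the substitution $A\mapsto PAP^{-1}$: the right-hand sides involve only $p_1,p_m,p_{m-1},p_{m+1}$, which are functions of the eigenvalues, while the left-hand sides are the complete contractions $\operatorname{tr}(\nabla p_m(A))$ and $\operatorname{tr}(\nabla p_m(A)\,A^2)$, which are conjugation-invariant by the chain rule. Since a symmetric matrix is orthogonally diagonalizable, it suffices to check the identities at $A=\operatorname{diag}(\k_1,\dots,\k_n)$. At such an $A$ the Kronecker-delta formula shows at once that the gradient is diagonal — its $(i,j)$ entry with $i\ne j$ vanishes — and that $(p_m)^i_i=\partial p_m/\partial\k_i=\binom{n}{m}^{-1}\sigma_{m-1}(\k\mid i)$, where $\sigma_{m-1}(\k\mid i)$ denotes the $(m-1)$st elementary symmetric function in the variables obtained by deleting $\k_i$, and $\sigma_m=\binom{n}{m}p_m$. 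This is the step that converts matrix contractions into scalar identities in $\k$.

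It then remains to verify two combinatorial facts. For the second identity, $(p_m)^j_i\delta^i_j=\sum_i\partial p_m/\partial\k_i=\binom{n}{m}^{-1}\sum_i\sigma_{m-1}(\k\mid i)=\binom{n}{m}^{-1}(n-m+1)\sigma_{m-1}$, because each monomial of $\sigma_{m-1}$ uses $m-1$ of the $n$ variables and so survives $n-m+1$ of the deletions; the binomial identity $(n-m+1)\binom{n}{m-1}=m\binom{n}{m}$ turns this into $m\,p_{m-1}$. For the third, $(p_m)^j_i(A^2)^i_j=\sum_i\k_i^2\,\partial p_m/\partial\k_i=\binom{n}{m}^{-1}\sum_i\k_i^2\sigma_{m-1}(\k\mid i)$; using the splitting $\k_i\sigma_{m-1}(\k\mid i)=\sigma_m-\sigma_m(\k\mid i)$ and $\sum_i\k_i\sigma_m(\k\mid i)=(m+1)\sigma_{m+1}$ gives $\sum_i\k_i^2\sigma_{m-1}(\k\mid i)=\sigma_1\sigma_m-(m+1)\sigma_{m+1}$, and then $\binom{n}{m}^{-1}\sigma_1\sigma_m=n\,p_1p_m$ together with $\binom{n}{m}^{-1}(m+1)\sigma_{m+1}=(n-m)\,p_{m+1}$ (from $(m+1)\binom{n}{m+1}=(n-m)\binom{n}{m}$) delivers the right-hand side $n\,p_1p_m-(n-m)\,p_{m+1}$.

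None of the steps is a genuine obstacle; the only point needing a line of justification is the claim that $(p_m)^j_i$ is diagonal at a diagonal matrix, which is immediate from the generalized Kronecker delta since a repeated upper index kills $\delta^{b\,j_2\cdots j_m}_{a\,j_2\cdots j_m}$ when $b\ne a$. As an alternative that avoids any frame one can skip diagonalization and differentiate the Kronecker-delta formula directly, using the trace contraction $\delta^{i\,j_2\cdots j_m}_{i\,i_2\cdots i_m}=(n-m+1)\,\delta^{j_2\cdots j_m}_{i_2\cdots i_m}$ for the second identity, and the first-row Laplace expansion of $\delta^{j\,j_2\cdots j_m}_{i\,i_2\cdots i_m}$ together with the already-proved $(p_m)^j_iA^i_j=mp_m$ for the third; I would keep this computation in reserve.
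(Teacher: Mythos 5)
Your proof is correct. The paper itself offers no argument for Lemma \ref{l2.4}: it is recorded as a standard fact about elementary symmetric functions, with the reader referred to \cite{HGC}, so there is no in-paper proof to compare against. Your verification is the standard one and is fully consistent with the toolkit the paper does record: the Euler relation handles the first identity since $p_m$ is homogeneous of degree $m$ in the entries $A^i_j$; the reduction to a diagonal matrix (justified either by conjugation invariance of the traces $\operatorname{tr}(\nabla p_m(A))$, $\operatorname{tr}(\nabla p_m(A)A^2)$ or, as you note, directly from the generalized Kronecker delta, which kills the off-diagonal entries of the gradient at a diagonal $A$) converts the second and third identities into the scalar identities $\sum_i\sigma_{m-1}(\k|i)=(n-m+1)\sigma_{m-1}$ and $\sum_i\k_i^2\sigma_{m-1}(\k|i)=\sigma_1\sigma_m-(m+1)\sigma_{m+1}$, which are exactly the statements of Proposition \ref{p2.6} with $k=m-1$; and the binomial conversions $(n-m+1)\binom{n}{m-1}=m\binom{n}{m}$, $\sigma_1=np_1$, $(m+1)\binom{n}{m+1}=(n-m)\binom{n}{m}$ that you use to pass from $\sigma$ to the normalized $p$ are all correct. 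So your write-up in effect supplies the short proof the paper omits, and it does so by the same route a reader would extract from Proposition \ref{p2.6}; the only stylistic alternative would be your ``in reserve'' index computation, which avoids diagonalization but proves nothing more.
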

Using Lemma \ref{l2.2}, we have an important formula,
$$p_m^{ij}\nn_i\nn_j\Phi=p_m^{ij}(\phi'g_{ij}-h_{ij}u)=m(\phi'p_{m-1}-up_m).$$
This formula plays an important role in our subsequent proof. We denote $\sigma_k=C_n^kp_k$, i. e. the $k$th elementary symmetric function $\sigma_k$ is defined by
$$\sigma_k(\k_1,...,\k_n)=\sum_{1\leq i_1<\cdot\cdot\cdot<i_k\leq n}\k_{i_1}\cdot\cdot\cdot\k_{i_k}.$$
\begin{lemma}\label{l2.5}\cite{CGL}
	If $\k\in\Gamma_m^+=\{\k\in\mR^n:p_i(\k)>0, i=1,\cdots,m\}$, we have the following Newton-MacLaurin inequalities.
	\begin{align}
		p_{m}(\k)p_{k-1}(\k)&\le p_k(\k)p_{m-1}(\k), \quad 1\le k<m\le n,\\
		p_1\ge p_2^{\frac{1}{2}}\ge\cdots&\ge p_m^{\frac{1}{m}},\qquad 1<m\le n.
	\end{align}
	Equality holds if and only if $\k_1=\cdots=\k_n$.
	
Suppose that $\k_1\ge\cdots\ge\k_m\ge\cdots\ge\k_n$, then we have
\begin{align}
\k_1\ge\cdots\ge\k_m>0,\quad &\sigma_m(\k)\le C_n^m\k_1\cdots\k_m;\label{x2.26}\\
\sigma_m(\k)\ge \k_1\cdots\k_m, \quad &\text{ if }\k\in\Gamma_{m+1}^+.\label{x2.27}
\end{align}
\end{lemma}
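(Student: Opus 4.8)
The plan is to treat the statement as three essentially independent blocks: (i) the Newton–MacLaurin chain together with its equality case, (ii) the positivity $\k_1\ge\cdots\ge\k_m>0$ on $\Gamma_m^+$, and (iii) the two comparisons between $\sigma_m(\k)$ and the product $\k_1\cdots\k_m$. Throughout I write $(\k|i)$ for the $(n-1)$-tuple obtained from $\k$ by deleting the entry $\k_i$, and I use repeatedly the recursion $\sigma_j(\k)=\sigma_j(\k|i)+\k_i\,\sigma_{j-1}(\k|i)$.

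\textbf{Block (i).} I would start from the classical Newton inequalities $p_j(\k)^2\ge p_{j-1}(\k)p_{j+1}(\k)$, $1\le j\le n-1$, which hold for \emph{every} real $n$-tuple $\k$ (with the convention $p_0=1$). These follow from Rolle's theorem: beginning with $P(x)=\prod_i(x+\k_i)=\sum_i C_n^i p_i x^{n-i}$ and its $n$ real roots, differentiate $n-j-1$ times, reverse the resulting degree-$(j+1)$ polynomial, and differentiate $j-1$ more times; the outcome is a real-rooted quadratic whose discriminant inequality, after the factorials cancel, is exactly $p_j^2\ge p_{j-1}p_{j+1}$ (see \cite{HGC}). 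On $\Gamma_m^+$ all of $p_0,\dots,p_m$ are positive, so the ratios $r_j:=p_j/p_{j-1}$ ($1\le j\le m$) are well defined and Newton reads $p_1=r_1\ge r_2\ge\cdots\ge r_m>0$. Since $p_k=r_1\cdots r_k$, the inequality $r_m\le r_k$ for $k<m$ is precisely $p_mp_{k-1}\le p_kp_{m-1}$, the first displayed inequality. For MacLaurin, note that $p_k^{1/k}$ is the geometric mean of $r_1,\dots,r_k$; as the $r_j$ are nonincreasing, $r_k\le(r_1\cdots r_{k-1})^{1/(k-1)}$, so appending $r_k$ cannot raise the geometric mean, i.e. $p_k^{1/k}\le p_{k-1}^{1/(k-1)}$. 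Equality at any stage forces $r_1=\cdots=r_m$, hence $p_j^2=p_{j-1}p_{j+1}$ throughout, which in the discriminant step means a double root at every level and unwinds through the Rolle construction to $\k_1=\cdots=\k_n$.

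\textbf{Block (ii).} I would prove that $\k\in\Gamma_m^+$ (ordered) implies $\k_m>0$ by induction on $m$, with an inner induction on $n$. For $m=1$: $\sigma_1>0$ rules out $\k_1\le0$. For the base $n=m$ of the inner induction: $\k_1,\dots,\k_{m-1}>0$ by the outer hypothesis and $\sigma_m=\k_1\cdots\k_m>0$ force $\k_m>0$. For $n>m$: if $\k_n>0$ we are done; if $\k_n\le0$, the recursion plus an induction on $j$ with $\sigma_0(\k|n)=1$ gives $\sigma_j(\k|n)=\sigma_j(\k)+|\k_n|\,\sigma_{j-1}(\k|n)\ge\sigma_j(\k)>0$ for all $j\le m$, so $(\k|n)\in\Gamma_m^+$ and the inner hypothesis applies to it (its $m$ largest entries are still $\k_1,\dots,\k_m$).

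\textbf{Block (iii).} With positivity available, the upper bound $\sigma_m(\k)\le C_n^m\k_1\cdots\k_m$ follows by induction on $n$: the base $n=m$ is the identity $\sigma_m=\k_1\cdots\k_m$; for $n>m$ we have $(\k|n)\in\Gamma_m^+$, so by induction $\sigma_m(\k|n)\le C_{n-1}^m\k_1\cdots\k_m$ and $\sigma_{m-1}(\k|n)\le C_{n-1}^{m-1}\k_1\cdots\k_{m-1}$, while $\k_n\sigma_{m-1}(\k|n)\le\k_m\sigma_{m-1}(\k|n)$ if $\k_n>0$ and is $\le0$ otherwise; adding and using $C_{n-1}^m+C_{n-1}^{m-1}=C_n^m$ closes the induction. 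The lower bound $\sigma_m(\k)\ge\k_1\cdots\k_m$ under the stronger hypothesis $\k\in\Gamma_{m+1}^+$ is, I expect, the genuine difficulty. The base $n=m+1$ is easy: then all $\k_i>0$ and $\sigma_m=(\prod_i\k_i)\sum_i\k_i^{-1}\ge(\prod_i\k_i)\k_{m+1}^{-1}=\k_1\cdots\k_m$. If $\k_n\ge0$ in the inductive step, then $(\k|n)\in\Gamma_{m+1}^+$ and $\sigma_m(\k)=\sigma_m(\k|n)+\k_n\sigma_{m-1}(\k|n)\ge\sigma_m(\k|n)\ge\k_1\cdots\k_m$. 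The obstruction is the case $\k_n<0$: now $\k_n\sigma_{m-1}(\k|n)<0$, and to absorb it one must exploit the so-far-unused constraint $\sigma_{m+1}(\k)>0$, which through $\sigma_{m+1}(\k)=\sigma_{m+1}(\k|n)+\k_n\sigma_m(\k|n)$ yields $-\k_n<\sigma_{m+1}(\k|n)/\sigma_m(\k|n)$; substituting this bound and invoking Newton's inequality for the $(n-1)$-tuple $(\k|n)$ reduces the claim to an algebraic inequality among $\sigma_{m-1},\sigma_m,\sigma_{m+1}$ of $(\k|n)$ and $\k_1\cdots\k_m$. Carrying this through cleanly will likely require strengthening the inductive hypothesis — for instance, tracking the surplus $\sigma_m(\k)-\k_1\cdots\k_m$ as an explicit nonnegative combination of the remaining eigenvalues — and this bookkeeping is the crux of the whole lemma. (Failing that, one may simply cite \cite{CGL,HGC}, where these are standard facts about G{\aa}rding cones.)
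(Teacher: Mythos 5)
Your blocks (i), (ii) and the upper bound \eqref{x2.26} are correct and self-contained (one small imprecision: equality in $p_mp_{k-1}=p_kp_{m-1}$ only forces $r_k=\cdots=r_m$, not $r_1=\cdots=r_m$; but a single Newton equality $p_j^2=p_{j-1}p_{j+1}$ with positive $p$'s already forces $\k_1=\cdots=\k_n$, so your conclusion stands). For comparison, the paper itself gives no argument at all — the lemma is quoted from \cite{CGL} (with \cite{HGC} for the background identities) — so falling back on the citation would match the paper; judged as a proof, however, your attempt is genuinely incomplete.

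The gap, which you yourself flag, is \eqref{x2.27} in the case $\k_n<0$, and your sketched route does fail quantitatively: deleting the smallest entry, bounding $-\k_n<\sigma_{m+1}(\k|n)/\sigma_m(\k|n)$ via $\sigma_{m+1}(\k)>0$, and then applying Newton's inequality to the $(n-1)$-tuple $(\k|n)$ gives only
\begin{equation*}
\sigma_m(\k)>\Big(1-\tfrac{C_{n-1}^{m+1}C_{n-1}^{m-1}}{(C_{n-1}^{m})^2}\Big)\sigma_m(\k|n)=\tfrac{n}{(m+1)(n-m)}\,\sigma_m(\k|n),
\end{equation*}
and the factor $\tfrac{n}{(m+1)(n-m)}$ is strictly less than $1$ whenever $m\ge1$ and $n\ge m+2$, so the induction on $n$ (deleting the smallest entry) cannot close without a strictly stronger inductive statement. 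A clean repair is to induct on $m$ and split off the \emph{largest} entry instead: write $\sigma_m(\k)=\k_1\sigma_{m-1}(\k|1)+\sigma_m(\k|1)$. Since $\sigma_j(\k|1)=\partial\sigma_{j+1}/\partial\k_1>0$ on $\Gamma_{j+1}^+\supseteq\Gamma_{m+1}^+$ for all $j\le m$ (the standard G{\aa}rding-cone fact), the $(n-1)$-tuple $(\k|1)$ lies in $\Gamma_m^+$ and its $m-1$ largest entries are $\k_2\ge\cdots\ge\k_m$, so the inductive hypothesis yields $\sigma_{m-1}(\k|1)\ge\k_2\cdots\k_m$; together with $\sigma_m(\k|1)>0$ and $\k_1>0$ this gives $\sigma_m(\k)\ge\k_1\cdots\k_m$, with base case $m=1$ being exactly $\sigma_1(\k|1)>0$ on $\Gamma_2^+$. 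With that replacement (or simply the citation, as in the paper) the lemma is fully proved; as written, the crux case is missing.
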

Let us denote by $\sigma_{k}(\k|i)$ the sum of the terms of $\sigma_k(\k)$ not containing the factor $\k_i$. Then the following identities hold.
\begin{proposition}\label{p2.6}
	\cite{HGC} We have, for any $k=0,\cdots,n$, $i=1,\cdots,n$ and $\k\in\mR^n$,
	\begin{align}
		\frac{\p\sigma_{k+1}}{\p\k_i}(\k)&=\sigma_{k}(\k|i),\\
		\sigma_{k+1}(\k)&=\sigma_{k+1}(\k|i)+\k_i\sigma_{k}(\k|i),\\
		\sum_{i=1}^n\sigma_{k}(\k|i)&=(n-k)\sigma_k(\k),\\
		\sum_{i=1}^n\k_i\sigma_{k}(\k|i)&=(k+1)\sigma_{k+1}(\k),\\
		\sum_{i=1}^n\k_i^2\sigma_{k}(\k|i)&=\sigma_1(\k)\sigma_{k+1}(\k)-(k+2)\sigma_{k+2}(\k).
	\end{align}
\end{proposition}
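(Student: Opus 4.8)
The final statement to prove is Proposition \ref{p2.6}, the collection of classical identities for the elementary symmetric functions $\sigma_k(\k)$ and the quantities $\sigma_k(\k|i)$.

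\textbf{Proof proposal.} The plan is to establish the five identities in sequence, each one bootstrapping off the previous. The whole scheme rests on the defining combinatorial description: $\sigma_k(\k)$ is the sum over all $k$-element subsets $S\subseteq\{1,\dots,n\}$ of $\prod_{j\in S}\k_j$, while $\sigma_k(\k|i)$ is the same sum restricted to subsets $S$ with $i\notin S$. First I would prove the partial-derivative formula $\tfrac{\p\sigma_{k+1}}{\p\k_i}=\sigma_k(\k|i)$: the monomials of $\sigma_{k+1}$ containing $\k_i$ are exactly $\k_i\prod_{j\in S}\k_j$ with $|S|=k$ and $i\notin S$, so differentiating in $\k_i$ kills the $\k_i$-free monomials and strips one factor of $\k_i$ from the rest, leaving precisely $\sum_{|S|=k,\,i\notin S}\prod_{j\in S}\k_j=\sigma_k(\k|i)$. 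Next, the splitting identity $\sigma_{k+1}(\k)=\sigma_{k+1}(\k|i)+\k_i\sigma_k(\k|i)$ follows by partitioning the $(k+1)$-subsets of $\{1,\dots,n\}$ into those avoiding $i$ (giving $\sigma_{k+1}(\k|i)$) and those containing $i$ (each such subset is $\{i\}\cup S$ with $|S|=k$, $i\notin S$, contributing $\k_i\prod_{j\in S}\k_j$, whose total is $\k_i\sigma_k(\k|i)$).

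For the third identity $\sum_{i=1}^n\sigma_k(\k|i)=(n-k)\sigma_k(\k)$, I would count incidences: in the double sum $\sum_i\sigma_k(\k|i)$ a fixed monomial $\prod_{j\in S}\k_j$ (with $|S|=k$) appears once for every index $i\notin S$, i.e.\ $n-k$ times, so the total is $(n-k)\sigma_k(\k)$. The fourth identity $\sum_{i=1}^n\k_i\sigma_k(\k|i)=(k+1)\sigma_{k+1}(\k)$ is then immediate from the splitting identity: summing $\sigma_{k+1}(\k)=\sigma_{k+1}(\k|i)+\k_i\sigma_k(\k|i)$ over $i$ gives $n\sigma_{k+1}(\k)=\sum_i\sigma_{k+1}(\k|i)+\sum_i\k_i\sigma_k(\k|i)=(n-k-1)\sigma_{k+1}(\k)+\sum_i\k_i\sigma_k(\k|i)$ by the third identity applied at level $k+1$, and rearranging yields $\sum_i\k_i\sigma_k(\k|i)=(k+1)\sigma_{k+1}(\k)$. (Alternatively, one can see it directly: in $\sum_i\k_i\sigma_k(\k|i)$ a monomial $\prod_{j\in T}\k_j$ with $|T|=k+1$ arises exactly when $i\in T$ and $S=T\setminus\{i\}$, so it is counted $|T|=k+1$ times.)

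For the last identity $\sum_{i=1}^n\k_i^2\sigma_k(\k|i)=\sigma_1(\k)\sigma_{k+1}(\k)-(k+2)\sigma_{k+2}(\k)$, I would multiply the splitting identity (written at level $k+1$ in the form $\sigma_{k+1}(\k|i)=\sigma_{k+1}(\k)-\k_i\sigma_k(\k|i)$) is not quite the right handle; instead start from $\k_i\sigma_{k+1}(\k|i)=\k_i\sigma_{k+1}(\k)-\k_i^2\sigma_k(\k|i)$ after substituting $\sigma_{k+1}(\k|i)=\sigma_{k+1}(\k)-\k_i\sigma_k(\k|i)$. Summing over $i$ gives $\sum_i\k_i\sigma_{k+1}(\k|i)=\sigma_1(\k)\sigma_{k+1}(\k)-\sum_i\k_i^2\sigma_k(\k|i)$. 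The left side equals $(k+2)\sigma_{k+2}(\k)$ by the fourth identity applied at level $k+1$, and rearranging delivers the claim. I do not anticipate a genuine obstacle here; the only mild subtlety is bookkeeping the boundary cases $k=0$ and $k=n$ (where one uses the conventions $\sigma_0=1$, $\sigma_{n+1}=\sigma_{n+2}=0$, and $\sigma_k(\k|i)=0$ for $k\ge n$), and making sure each of the first four identities is proved at the index level ($k$ or $k+1$) actually needed by the subsequent step. Since all of this is standard and purely combinatorial, I would present it compactly, citing \cite{HGC} for the details rather than reproducing every incidence count.
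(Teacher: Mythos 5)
Your proposal is correct: all five identities follow exactly as you describe from the subset-sum definition of $\sigma_k$ and $\sigma_k(\k|i)$, with the fourth and fifth obtained by summing the splitting identity $\sigma_{k+1}(\k)=\sigma_{k+1}(\k|i)+\k_i\sigma_k(\k|i)$ (the latter after multiplying by $\k_i$) and invoking the earlier identities at level $k+1$, and the boundary conventions you list handle $k=0$ and $k=n$. The paper itself gives no proof and simply cites \cite{HGC}; your argument is precisely the standard combinatorial one that citation stands for, so there is nothing to reconcile — only note that the sentence beginning ``I would multiply the splitting identity\dots is not quite the right handle'' should be cleaned up, since the computation that follows it is already the correct one.
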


Recall in \cite{R1,R2}, Reilly proved a formula for the $r$-th Newton operator $T_r(h)$ of the second fundamental form in general Riemannian manifolds, see Proposition 1 in \cite{R2}. In space form, since second fundamental form is Codazzi, Reilly's formula is equivalent to say the Newton operator $T_r$ is divergent free. More specifically, in local coordinates, the Newton operator $T_r$ of $h^i_j$ is a symmetric matrix $(T_r)_i^j=\dfrac{\p\sigma_{r+1}}{\p h^i_j}=:(\sigma_{r+1})_i^j$, and $$\nn_j(T_r)_i^j=0.$$
Using this property, we have the Minkowski formula.
\begin{proposition}\label{xp}
	Let $M$ be a closed hypersurface in $\mK^{n+1}$. Then, for $k=0,1,\cdots,n-1,$
	\begin{equation}\label{x2}
		\int_{M}p_{k+1}(\k)u=\int_{M}\phi'(\rho)p_k(\k),
	\end{equation}
	where we use the convention that $p_0\equiv1.$
\end{proposition}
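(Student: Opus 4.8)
The plan is to integrate the divergence of the tangential vector field $T_k\nn\Phi$ over the closed hypersurface $M$, where $T_k$ is the $k$-th Newton operator, $(T_k)^i_j=(\sigma_{k+1})^i_j=\frac{\p\sigma_{k+1}}{\p h^i_j}$, and $\Phi$ is the potential introduced in Section 1. As recalled just before the statement, in a space form the second fundamental form is Codazzi (equation (\ref{2.9})), so Reilly's identity reduces to the fact that $T_k$ is divergence free, $\nn_j(T_k)^j_i=0$. Using this together with the Leibniz rule and $\nn g=0$,
\begin{equation*}
\div(T_k\nn\Phi)=\nn_j\big((T_k)^j_i g^{il}\nn_l\Phi\big)=(T_k)^{jl}\nn_j\nn_l\Phi .
\end{equation*}

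Next I would substitute the Hessian formula for $\Phi$ from Lemma \ref{l2.2}, $\nn_i\nn_j\Phi=\phi'g_{ij}-h_{ij}u$, to obtain
\begin{equation*}
\div(T_k\nn\Phi)=\phi'\,\tr(T_k)-u\,(T_k)^{ij}h_{ij}.
\end{equation*}
The two contractions are then evaluated with Lemma \ref{l2.4} applied to $\sigma_{k+1}=C_n^{k+1}p_{k+1}$: the identity $(p_{k+1})^j_i\delta^i_j=(k+1)p_k$ gives $\tr(T_k)=(k+1)C_n^{k+1}p_k=(n-k)C_n^kp_k$, while $(p_{k+1})^j_ih^i_j=(k+1)p_{k+1}$ gives $(T_k)^{ij}h_{ij}=(n-k)C_n^kp_{k+1}$, where in both cases one uses the elementary identity $(k+1)C_n^{k+1}=(n-k)C_n^k$. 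Hence
\begin{equation*}
\div(T_k\nn\Phi)=(n-k)C_n^k\big(\phi'p_k-up_{k+1}\big).
\end{equation*}
Since $M$ is closed, the divergence theorem forces $\int_M\div(T_k\nn\Phi)\,d\mu=0$; dividing by the positive constant $(n-k)C_n^k$ (legitimate because $k\le n-1$, so $n-k\ge1$) yields $\int_Mup_{k+1}=\int_M\phi'p_k$. The case $k=0$ is the degenerate instance $T_0=\mathrm{Id}$, for which the same computation reads $\Delta\Phi=n\phi'-Hu=n(\phi'-p_1u)$, matching the convention $p_0\equiv1$.

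I do not anticipate a genuine obstacle here: the argument is a standard divergence-theorem computation. The only points demanding care are (i) the use of the Codazzi equation (\ref{2.9}), which holds precisely because the ambient manifold is a space form, to ensure $T_k$ is divergence free, and (ii) bookkeeping of the combinatorial factors relating $\sigma_k$ and $p_k$. An alternative derivation of (\ref{x2}) from the conformal Killing property $\bar\nn_YV=\phi'Y$ of $V=\bar\nn\Phi$ combined with the first-variation formula for the quermassintegrals would also work, but the divergence identity above is the most direct route.
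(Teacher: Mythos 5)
Your proposal is correct and is essentially the paper's own argument: integrate $\nn_i\big((T_k)^i_j\nn^j\Phi\big)$ over the closed hypersurface, use that $T_k$ is divergence free (Codazzi in a space form) together with $\nn_i\nn_j\Phi=\phi'g_{ij}-uh_{ij}$, and apply the divergence theorem to get $\int_M\big((n-k)\phi'\sigma_k-(k+1)u\sigma_{k+1}\big)d\mu=0$, which is (\ref{x2}) after normalization. The combinatorial bookkeeping $(k+1)C_n^{k+1}=(n-k)C_n^k$ and the $k=0$ case are handled correctly.
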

\begin{proof}
	Applying divergence theorem, we obtain
	\begin{align*}
		0=\int_{M}\nn_i\((T_k)^i_j\nn^j\Phi\)=\int_M(n-k)\phi'\sigma_k-(k+1)\sigma_{k+1}u.
	\end{align*}
\end{proof}
Finally, we give some properties of curvature function $F$ in flows (\ref{x1.4}) and (\ref{x1.7}). It is useful to consider $F$ as a function of the symmetric matrix $[a_{j}^i]$ as well as $(\mu_1,\cdots,\mu_n)$, where $\mu_1,\cdots,\mu_n$ are the eigenvalues of the matrix $[a_{j}^i]$, i. e.
\begin{equation}\label{3.1}
	F([a_{j}^i])=F(\mu_1,\cdots,\mu_n),
\end{equation}
It is not difficult to see that the eigenvalues of $[F^{j}_i]=[\frac{\p F}{\p a_{j}^i}]$ are $\frac{\p F}{\p\mu_1},\cdots,\frac{\p F}{\p\mu_n}$.

\begin{proposition}\label{p2.8}
(1) Suppose $F$ is strictly increasing, homogeneous of degree $1$, and strictly positive on $\Gamma$ with $F(1,\cdots,1)=1$.
 If $F$ is concave, then $\sum_{i=1}^nF^i_i\ge1$;
If $F$ is inverse concave, then $F^j_i(h^2)_j^i\ge F^2.$

(2) If $F=\frac{p_k}{p_{k-1}}$, $1\le k\le n$, then on ${\Gamma}_k$,
$$1\le\sum_{i=1}^nF^i_i\le k,\qquad F^2\le F^j_i(h^2)_j^i\le(n-k+1)F^2.$$
\end{proposition}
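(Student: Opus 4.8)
The plan is to reduce both assertions, via the simultaneous diagonalization of $[F^j_i]$ and $[(h^2)^i_j]$, to inequalities among symmetric functions of the eigenvalues $\k=(\k_1,\cdots,\k_n)$: writing $\dot F_i=\p F/\p\k_i$, one has $\sum_iF^i_i=\sum_i\dot F_i$ and $F^j_i(h^2)^i_j=\sum_i\dot F_i\k_i^2$. Part (1) then follows from the interplay of (inverse) concavity with Euler's homogeneity relation, while part (2) is a direct computation using Lemma \ref{l2.4} together with the Newton--MacLaurin inequalities.

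For the first inequality in (1), I would use that a concave differentiable function lies below each of its tangent planes: for every $\k\in\Gamma$,
\begin{equation*}
1=F(\1)\le F(\k)+\sum_i\dot F_i(\k)(1-\k_i)=F(\k)+\sum_i\dot F_i(\k)-\sum_i\dot F_i(\k)\k_i,
\end{equation*}
and since $F$ is homogeneous of degree one, Euler's relation $\sum_i\dot F_i(\k)\k_i=F(\k)$ collapses the right-hand side to $\sum_i\dot F_i(\k)$; hence $\sum_iF^i_i\ge1$. For the second inequality in (1) one must work on $\Gamma_+$, where inverse concavity is defined. I would introduce the dual function $F_*(x)=\big(F(x_1^{-1},\cdots,x_n^{-1})\big)^{-1}$, which is concave, homogeneous of degree one, and normalized so that $F_*(\1)=1$; applying the inequality just proved to $F_*$ gives $\sum_i\p_{x_i}F_*(x)\ge1$, while a chain-rule computation with $y_i=x_i^{-1}$ yields $\p_{x_i}F_*(x)=\dot F_i(y)\,y_i^2/F(y)^2$. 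Evaluating at $x_i=\k_i^{-1}$ then gives $\sum_i\dot F_i(\k)\k_i^2\ge F(\k)^2$, that is, $F^j_i(h^2)^i_j\ge F^2$.

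For part (2), with $F=p_k/p_{k-1}$ I would simply differentiate, so that $\dot F_i=\big((p_k)^i_i\,p_{k-1}-p_k\,(p_{k-1})^i_i\big)/p_{k-1}^{2}$. Summing over $i$ and using $\sum_i(p_m)^i_i=mp_{m-1}$ from Lemma \ref{l2.4},
\begin{equation*}
\sum_iF^i_i=k-(k-1)\frac{p_kp_{k-2}}{p_{k-1}^{2}};
\end{equation*}
on $\Gamma_k$ the numbers $p_{k-2},p_{k-1},p_k$ are positive (with the convention $p_0=1$), so $\sum_iF^i_i\le k$, while Newton's inequality $p_{k-1}^2\ge p_kp_{k-2}$ gives $\sum_iF^i_i\ge k-(k-1)=1$. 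Likewise, using $\sum_i(p_m)^i_i\k_i^2=np_1p_m-(n-m)p_{m+1}$ from Lemma \ref{l2.4}, the terms $np_1p_kp_{k-1}$ cancel and
\begin{equation*}
F^j_i(h^2)^i_j=\sum_i\dot F_i\k_i^2=\frac{(n-k+1)p_k^2-(n-k)p_{k-1}p_{k+1}}{p_{k-1}^{2}}.
\end{equation*}
Comparing with $F^2=p_k^2/p_{k-1}^2$, the bound $F^j_i(h^2)^i_j\ge F^2$ on $\Gamma_k$ is equivalent to $(n-k)\big(p_k^2-p_{k-1}p_{k+1}\big)\ge0$, which is Newton's inequality (and trivial when $k=n$); and on $\Gamma_{k+1}$ the bound $F^j_i(h^2)^i_j\le(n-k+1)F^2$ is equivalent to $(n-k)p_{k-1}p_{k+1}\ge0$, which holds there since $p_{k-1},p_{k+1}>0$.

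None of these steps is deep — the work is almost entirely bookkeeping. The main things to watch are consistency of conventions (the normalization $p_m=(C_n^m)^{-1}\sigma_m$, the conventions $p_0=1$ and $p_{-1}=p_{n+1}=0$) and the fact that inverse concavity only makes sense on $\Gamma_+$, which is precisely why the second bullet of (1) is stated there while (2) recovers the same inequality on the larger cone $\Gamma_k$ by an explicit computation. The one place that genuinely needs to be checked rather than quoted is the dual function $F_*$: that it is concave and positively homogeneous of degree one, together with the chain-rule identity $\p_{x_i}F_*=\dot F_i(y)\,y_i^2/F(y)^2$.
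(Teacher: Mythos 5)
Your proof is correct and, for part (2), follows essentially the same route as the paper: diagonalize, compute $\sum_iF^i_i=k-(k-1)p_kp_{k-2}/p_{k-1}^2$ and the analogous expression for $F^j_i(h^2)^i_j$ via Lemma \ref{l2.4}, then conclude with the Newton--MacLaurin inequalities --- the paper writes out only the trace computation and states that the remaining bounds follow ``in the same way'', which you carry out explicitly. For part (1) the paper simply cites \cite{AMZ} (Lemmas 4 and 5), and your tangent-plane-plus-Euler argument together with the dual-function computation for $F_*$ is precisely the standard proof behind that citation, so nothing in your approach is genuinely different.
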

\begin{proof}
(1) has been proved in \cite{AMZ} Lemma 4 and Lemma 5. ($2$) was first observed by Brendle, Guan and Li. We give a simple proof here. At one point, we can choose normal coordinates to let $(F^j_i)$ and $(h_j^i)$ be diagonal. Then by Lemma \ref{l2.4}, we have
\begin{equation*}
\sum_{i}F^i_i=\dfrac{\sum_{i}p_k^ip_{k-1}-\sum_{i}p_{k-1}^ip_{k}}{p_{k-1}^2}=k-(k-1)\dfrac{p_{k-2}p_{k}}{(p_{k-1})^2}.
\end{equation*}
By Lemma \ref{l2.5}, we get $\sum_{i}F^i_i\ge 1.$ Because $\dfrac{p_{k-2}p_{k}}{(p_{k-1})^2}$ is nonnegative on ${\Gamma}_k$, we have $\sum_{i=1}^nF^i_i\le k$. Another inequality can be proved in the same way.
\end{proof}

\section{Flow (\ref{x1.7}) in sphere with $F=p_k^\frac{1}{k}$}
In this section, we introduce the locally constrained flow (\ref{x1.7}), which is an extension of the mean curvature type flow studied in \cite{GL}. Let $X_0:M^n\rightarrow\mS^{n+1}$ be a smooth embedding such that $M_0$ is a closed, star-shaped hypersurface in $\mS^{n+1}$. We consider the smooth family of immersions $X:M^n\times[0,T)\rightarrow\mS^{n+1}$ satisfying the following evolution equations:
\begin{equation}\label{4.1}
	\frac{\p}{\p t}X(x,t)=\(\phi'-uF\)\nu(x,t).
\end{equation}

\subsection{The motivation of studying flow (\ref{4.1}) in sphere with $F=p_k^\frac{1}{k}$}

Since the $C^2$ estimates of flow (\ref{4.1}) in sphere with $F=\frac{p_k}{p_{k-1}}$ is still open, we shall study flow (\ref{4.1}) in sphere with $F=p_k^\frac{1}{k}$. The Alexandrov-Fenchel inequalities in sphere can also be derived by this flow. Specially,
\begin{lemma}
	Suppose $M_t$ is a smooth, closed, strictly $k$-convex solution to the flow (\ref{4.1}) with $F=p_k^\frac{1}{k}$ in sphere, $2\le k\le n$. Then the following hold:
	
	$(\rmnum1)$ $W_0$ is monotone increasing and $W_0$ is a constant function if and only if $M_t$ is a slice for each $t$;
	
	$(\rmnum2)$ $W_m$ is monotone decreasing with $k-1\le m\le k$ and $W_m$ is a constant function if and only if $M_t$ is a slice for each $t$;
	
	$(\rmnum3)$ $W_{-1}^{\phi'}$ is monotone increasing and $W_{-1}^{\phi'}$ is a constant function if and only if $M_t$ is a slice for each $t$.
\end{lemma}
However, the longtime existence of flow (\ref{4.1}) with $F=p_k^\frac{1}{k}$ is also hard to be derived if $k\ge3$. In this section, we derive this result if $k=2$ and the initial hypersurface is strictly convex, which can also be used to proved Theorems \ref{t5.7} and \ref{t5.9} by the following lemma.
\begin{lemma}	
	Suppose $M_t$ is a smooth, closed, strictly convex solution to the flow (\ref{4.1}) with $F=p_2^\frac{1}{2}$ in sphere. Then the following hold:
	
	$(\rmnum1)$ $W_0$ is monotone increasing and $W_0$ is a constant function if and only if $M_t$ is a slice for each $t$;
	
	$(\rmnum2)$ $W_m$ is monotone decreasing with $1\le m\le n-1$ and $W_m$ is a constant function if and only if $M_t$ is a slice for each $t$;
	
	$(\rmnum3)$ $W_{-1}^{\phi'}$ is monotone increasing and $W_{-1}^{\phi'}$ is a constant function if and only if $M_t$ is a slice for each $t$;
	
	$(\rmnum4)$ $\int_{M_t}\Phi p_kd\mu_t+kW_{k-1}(\Om_t)$ is monotone decreasing with $1\le k\le n-2$ and $\int_{M_t}\Phi p_kd\mu+kW_{k-1}(\Om_t)$ is a constant function if and only if $M_t$ is a slice for each $t$.
\end{lemma}
The proof of above two lemmas is similar to Lemma \ref{l5.6}, and we no longer give a detailed proof.
\subsection{The result of flow (\ref{x1.7}) in sphere}

By Section 2.3, we only need to consider the following parabolic initial value problem on $\mathbb{S}^n$,
\begin{equation}\label{x2.17}
	\begin{cases}
		\p_t\rho&=\(\phi'-uF\)\omega, \;\;(z,t)\in\mathbb{S}^n\times [0,\infty),\\
		\rho(\cdot,0)&=\rho_0.
	\end{cases}
\end{equation}
Equivalently, the equation for $\gamma$ satisfies
\begin{equation}\label{x2.18}
	\p_t\gamma=\(\phi'-uF\)\frac{\omega}{\phi}.
\end{equation}

To prove Theorem \ref{t4.1}, we need the following lemmas.
\begin{lemma}\label{xl4.2}
	Let $\rho(x,t)$, $t\in[0,T)$, be a smooth, star-shaped solution to (\ref{x2.17}) with initial hypersurface $M_0$ in $\mathbb S^{n+1}$. Then
	\begin{equation}\label{4.2}
		\min_{z\in \mS^n}\rho(z,0)\leq \rho(z,t)\leq \max_{z\in \mS^n}\rho(z,0),\quad \forall(z,t)\in\mS^n\times[0,T).
	\end{equation}
\end{lemma}
\begin{proof}
	This proof is derived directly by the maximal principle. We omit this proof here.
\end{proof}
We give some evolution equations of flow (\ref{4.1}).
\begin{lemma}\label{xl4.3}
	Let $M(t)$ be a smooth family of closed hypersurfaces in $\mS^{n+1}$ evolving along flow (\ref{4.1}). Then we have the following evolution equations.
	\begin{align}
		\cL u=&-\nn\Phi\nn\phi'+F\nn\Phi\nn u+(\phi'-2uF)\phi'+u^2F^{ij}(h^2)_{ij},\label{4.3}\\
		\cL F=&2F^{ij}\nn_iu\nn_jF+F\nn\Phi\nn F-(F^{ij}(h^2)_{ij}-F^2)\phi'+uF(\sum F^{ii}-1),\label{4.4}
	\end{align}
	where $\cL=\p_t-uF^{ij}\nn_i\nn_j$.
\end{lemma}
\begin{proof}
	The calculation can be seen in \cite{CGL}. Note that we should replace $c_{n,k}$ by $1$ since the form of the flow in \cite{CGL} is different from (\ref{4.1}).
\end{proof}
Then we have the $C^1$ estimates by the above lemma.
\begin{lemma}\label{xl4.4}
	Let $M_0$ be a star-shaped, strictly 2-convex hypersurface in $\mS^{n+1}$. Along the flow (\ref{4.1}) with $F=p_2^\frac{1}{2}$, the spatial minimum of $u$ is increasing. Therefore, we have the lower bound of $u$ and $|D\gamma|\le C_{1}$, where $C_{1}$ only depends on the initial hypersurface.
\end{lemma}
\begin{proof}
	At the spatial minimum point $P$ of $u$, we have
	$$\nn u=0.$$
	By $F^j_i(h^2)^i_j\ge F^2$ and Lemma \ref{xl4.3}, we have
	$$\cL u\ge |\nn\Phi|^2+(\phi'-uF)^2\ge0.$$
	This implies that the spatial minimum of $u$ is increasing. Meanwhile, we have $|D\gamma|\le C_{1}$ by $u=\frac{\phi}{\om}$.
\end{proof}
To derive the bounds of $F$ and the $C^2$ estimates, we find an interesting algebra property of $p_2^\frac{1}{2}$. We denote $f\sim g$ by $\frac{1}{C}g\le f\le Cg$ for some constant $C>0$.
If $F=\frac{p_{k+1}}{p_{k}}$ with $1\le k\le n-2$ and the hypersurface is strictly convex, the algebra property $(F^{ii}-1)\sim (\frac{F^{ij}(h^2)_{ij}}{F^2}-1)$ has been proved in \cite{CGL}. 
If $F=p_2^\frac{1}{2}$, both $F^{ii}$ and $\frac{F^{ij}(h^2)_{ij}}{F^2}$ are unbounded in some cases. We also find an interesting algebra property that $(F^{ii}-1)\sim (\frac{F^{ij}(h^2)_{ij}}{F^2}-1)$ for $F=p_2^\frac{1}{2}$. We hope that might be useful to study other problems.
\begin{lemma}\label{xl4.5}
	Let $\k=(\k_1,\cdots,\k_n)\in\Gamma_2^+$ and $F=p_2^\frac{1}{2}$. Then we have
	$$\frac{F^{ij}(h^2)_{ij}}{F^2}-1\ge\frac{n}{2}(F^{ii}-1).$$
\end{lemma}
\begin{proof}
	By Lemma \ref{l2.4} and direct computation, we derive
	\begin{align*}
		\frac{F^{ij}(h^2)_{ij}}{F^2}-1&=\dfrac{np_1p_2-(n-2)p_3-2p_2^\frac{3}{2}}{2p_2^\frac{3}{2}}=\frac{n-2}{2}\frac{p_1p_2-p_3}{p_2^\frac{3}{2}}+\frac{p_1-p_2^\frac{1}{2}}{p_2^\frac{1}{2}}\\
		&\ge \frac{n}{2}\frac{p_1-p_2^\frac{1}{2}}{p_2^\frac{1}{2}}=\frac{n}{2}(F^{ii}-1),
	\end{align*}
	where we have used $p_2^\frac{3}{2}\ge p_3$ if $\kappa=(\kappa_1,\cdots,\kappa_n)\in\Gamma_2^+$.
\end{proof}
\begin{lemma}\label{xl4.6}
	Let $\k=(\k_1,\cdots,\k_n)\in\Gamma_n^+=\Gamma_+$ and $F=p_2^\frac{1}{2}$. Then we have
	$$\(\frac{F^{ij}(h^2)_{ij}}{F^2}-1\)\sim (F^{ii}-1).$$
\end{lemma}
\begin{proof}
	Note that
	$$\frac{F^{ij}(h^2)_{ij}}{F^2}-1=\frac{n-2}{2}\frac{p_1p_2-p_3}{p_2^\frac{3}{2}}+\frac{p_1-p_2^\frac{1}{2}}{p_2^\frac{1}{2}},\quad F^{ii}-1=\frac{p_1-p_2^\frac{1}{2}}{p_2^\frac{1}{2}}.$$
	It suffices to prove $\frac{(n-2)\sigma_1\sigma_2-3n\sigma_3}{\sigma_2}\le C(p_1-p_2^\frac{1}{2})$ by $p_k=(C_n^k)^{-1}\sigma_k$. By Proposition \ref{p2.6}, we get
	\begin{equation}\label{4.5}
		\begin{split}
			(n-2)\sigma_1\sigma_2-3n\sigma_3=&(\k_1+\cdots+\k_n)\sum_{i}\sigma_2(\k|i)-n\sum_{i}\k_i\sigma_2(\k|i)\\
			=&\sum_j\k_j\sum_{i}\sigma_2(\k|i)-n\sum_{i}\k_i\sigma_2(\k|i)\\
			=&\sum_{j}\sum_{i}(\k_j-\k_i)\sigma_2(\k|i)\\
			=&\sum_{i<j}(\k_j-\k_i)(\sigma_2(\k|i)-\sigma_2(\k|j))\\
			=&\sum_{i<j}(\k_j-\k_i)(\k_j\sigma_1(\k|ij)-\k_i\sigma_1(\k|ij))\\
			=&\sum_{i<j}(\k_j-\k_i)^2\sigma_1(\k|ij).
		\end{split}
	\end{equation}
	Let $\k_1\ge\cdots\ge\k_n$. By (\ref{x2.26}), (\ref{x2.27}) and (\ref{4.5}), we can obtain
	\begin{equation}\label{4.6}
		\begin{split}
			&\frac{(n-2)\sigma_1\sigma_2-3n\sigma_3}{\sigma_2}\\
			=&\frac{\sum_{i<j}(\k_j-\k_i)^2\sigma_1(\k|ij)}{\sigma_2}\\
			\le &(n-2)\(\frac{\sum_{2\le i<j}(\k_j-\k_i)^2\k_1}{\k_1\k_2}+\frac{\sum_{ i<2<j}(\k_j-\k_i)^2\k_2}{\k_1\k_2}+\frac{\sum_{ i<j\le2}(\k_j-\k_i)^2\k_3}{\k_1\k_2}\)\\
			\le&C_{2}\(\frac{(\k_2-\k_n)^2}{\k_2}+\frac{2(\k_1-\k_n)^2}{\k_1}\)\le3C_{2}\frac{(\k_1-\k_n)^2}{\k_1},
		\end{split}
	\end{equation}
	where we have used $\frac{(\k_2-\k_n)^2}{\k_2}\le\frac{(\k_1-\k_n)^2}{\k_1}.$ In fact,
	\begin{equation*}
		\frac{(\k_2-\k_n)^2}{\k_2}-\frac{(\k_1-\k_n)^2}{\k_1}=(\k_2+\frac{\k_n^2}{\k_2})-(\k_1+\frac{\k_n^2}{\k_1})\le0
	\end{equation*}
	by the property of the function $y=x+\frac{\k_n^2}{x}$ and $0<\k_n\le\k_2\le\k_1$. On the other hand, by $\sigma_1\le n\k_1$ and $(n-1)\sigma_1^2-2n\sigma_2=\sum_{ i<j}(\k_i-\k_j)^2$,
	\begin{equation}\label{4.7}
		\begin{split}
			p_1-p_2^\frac{1}{2}&=\frac{p_1^2-p_2}{p_1+p_2^\frac{1}{2}}\ge C(n)\frac{(n-1)\sigma_1^2-2n\sigma_2}{\sigma_1}\\
			&\ge C(n)\frac{\sum_{ i<j}(\k_i-\k_j)^2}{\k_1}\ge C(n)\frac{(\k_1-\k_n)^2}{\k_1}.
		\end{split}
	\end{equation}
	Combining (\ref{4.6}) and (\ref{4.7}), we complete this proof.
\end{proof}
Due to Lemma \ref{xl4.5} and \ref{xl4.6}, we obtain the uniform bound of $F$ as follows.
\begin{lemma}\label{xl4.7}
	Let $M_0$ be a star-shaped, strictly 2-convex hypersurface in $\mS^{n+1}$. Along the flow (\ref{4.1}) with $F=p_2^\frac{1}{2}$, we have
	$$F\le C_{3},$$ where $C_{3}$ only depends on the initial hypersurface.
\end{lemma}
\begin{proof}
	At the spatial maximum point of $F$, we have $\nn F=0$. Thus from Lemma \ref{xl4.3}, we can get
	$$\cL F=-\phi'F^2(\frac{F^{ij}(h^2)_{ij}}{F^2}-1)+uF(\sum F^{ii}-1).$$
	By Lemma \ref{xl4.5} and the maximum principle, the proof is completed.
\end{proof}
\begin{lemma}\label{xl4.8}
	Let $M_0$ be a strictly convex hypersurface in $\mS^{n+1}$. Along the flow (\ref{4.1}) with $F=p_2^\frac{1}{2}$, we have
	$$F\ge C_{4},$$ where $C_{4}$ only depends on the initial hypersurface.
\end{lemma}
\begin{proof}
	Using Lemma \ref{xl4.6}, this proof is similar to Lemma \ref{xl4.7}.
\end{proof}
Then we can start getting the $C^2$ estimates.
\begin{lemma}\label{xl4.9}
	Let $X(\cdot,t)$ be the smooth, strictly convex solution to the flow (\ref{4.1}) with $F=p_2^\frac{1}{2}$ which encloses the origin for $t\in[0,T)$. Then there is a positive constant $C_{5}$ depending on the initial hypersurface,  such that the principal curvatures of $X(\cdot,t)$ are uniformly bounded from below $$\k_i(\cdot,t)\geq C_{5} \text{ \qquad } \forall1\le i\le n,$$
	and hence, are compactly contained in $\Gamma_n^+$, in view of Lemma \ref{xl4.8}.
\end{lemma}
\begin{proof}
	We shall prove that the principal curvature radii $\l_i=\frac{1}{\k_i}$  is bounded from upper by a positive constant. The principal curvature radii of $M_t$ are the eigenvalues of $\{h^{il}g_{lj}\}$. Note that $p_2^\frac{1}{2}$ is uniformly continuous on the convex cone $\bar\Gamma_n^+$, the dual function of $F$, $(\frac{p_n}{p_{n-2}}(\l))^\frac{1}{2}|_{\p\Gamma_n^+}=0$, and $F$ is bounded from upper by a positive constant. If we have the upper bound of $\l_i$, Lemma \ref{xl4.8} imply that $\l_i$ remains in a fixed compact subset of $\Gamma_n^+$, which is independent of $t$. In other words, $\k_i$ remains in a fixed compact subset of $\Gamma_n^+$, which is independent of $t$. Therefore, next we only prove that $\l_i$ have an upper bound.
	
	We consider the following quantity
	$$\tilde\theta(x,t)=\log W(x,t)-\log u(x,t),$$
	where
	$$W(x,t)=\max\{h^{ij}(x,t)\xi_i\xi_j:g^{ij}\xi_i\xi_j=1\}.$$
	Assume that $\theta$ attains its maximum at $(x_0,t_0)$. At this point, we choose Riemannian normal coordinates, such that at this point we have
	\begin{equation*}
		g_{ij}=\delta_{ij},\quad h^i_j=h_{ij}=\k_i\delta_{ij}.
	\end{equation*}
	By a rotation, we may suppose that $W(x_0,t_0)=h^{ij}(x,t)\xi_i\xi_j$ with $\xi=(1,0,\cdots,0)$. After a standard discussion (refer to \cite{CS,DL} etc.), we only need to derive the upper bound of
	$$\theta(x,t)=\log(\l_1)-\log u,$$
	where  $\l_1=h^{11}g_{11}$.
	The evolution equation of $\theta$ has been calculated in \cite{CS}. We directly give the equation here without calculating. Note that we should replace $c_{n,k}$ in \cite{CS} by $1$ here.
	\begin{align}
		0\le& \p_t(\log\l_1-\log u)\notag\\
		\le& \frac{1}{\l_1u}\(-\l_1((2F^{ij}(h^2)_{ij}-F^{ii})u^2-u^2-\phi'uF+(\phi')^2+|\nn\phi'|)^2)\notag\\
		&+\frac{1}{2}F(\nn_1\Phi)^2-2u^2\l_1^2+u(\phi'+uF)\).\label{4.8}
	\end{align}
	Recall the proof in \cite{CS}.
	If $F=\frac{p_k}{p_{k-1}}$, by Proposition \ref{p2.8} and the upper bound of $F$, the upper bound of $\l_1$ can be derived by the maximum principle. However, if $F=p_2^\frac{1}{2}$, we don't have the upper bound of $F^{ii}$.  Thus we need to do more fine-grained processing.
	Thanks for Lemma \ref{xl4.5}, we have $F^{ij}(h^2)_{ij}\ge\frac{nF^2}{2}F^{ii}-\frac{(n-2)F^2}{2}.$ Meanwhile,
	$$F=F^{kk}h_{kk}\ge F^{kk}h_{11}$$
	implies $\l_1=\frac{1}{h_{11}}\ge \frac{F^{kk}}{F}$. By weighted Young's inequality $ab\le\eps a^p+\frac{(p\eps)^{-\frac{q}{p}}}{q}b^q$, we have
	$$F^{kk}\le\frac{F^{kk}}{F}+\frac{(\frac{3}{2})^{-{2}}}{3}F^2F^{kk}<\frac{F^{kk}}{F}+F^2F^{kk}\le \l_1+F^2F^{kk}.$$
	Then (\ref{4.8}) implies
	$$0\le -u^2\l_1^2+C_{6}\l_1+C_{7}$$
	by the upper bound of $F$ and the uniform bound of $u$, $\rho$. By the maximum principle we complete this proof.
\end{proof}
The estimates obtained in Lemmas \ref{xl4.2}, \ref{xl4.4}, \ref{xl4.9} only depend on the geometry of the initial data $M_0$. By Lemmas \ref{xl4.2}, \ref{xl4.4}, \ref{xl4.9}, we conclude that the equation (\ref{x2.17}) is uniformly parabolic. By the $C^0$ estimate (Lemma \ref{xl4.2}), the gradient estimate (Lemma \ref{xl4.4}), the $C^2$ estimate (Lemma \ref{xl4.9}) and the Krylov's and Nirenberg's theory \cite{KNV,LN}, we get the H$\ddot{o}$lder continuity of $D^2\rho$ and $\rho_t$. Then we can get higher order derivative estimates by the regularity theory of the uniformly parabolic equations. Hence we obtain the long time existence and $C^\infty$-smoothness of solutions for the flow (\ref{4.1}). The uniqueness of smooth solutions also follows from the parabolic theory.

Next we shall prove the solution of (\ref{4.1}) converges exponentially to a geodesic slice in the $C^\infty$-topology.
Firstly, we will show the solution of (\ref{4.1})  converges to a geodesic slice in the $C^\infty$-topology. In other words, when $t$ is large enough, different principal curvatures of the radial graph at the same point are comparable uniformly for arbitrary small $\eps$.
\begin{lemma}\label{l4.10}
	Let $\rho(x,t)$ be a smooth solution to (\ref{x2.17}) with
	$$||\rho||_{C^2(\mS^n)}\le C,\qquad \forall t.$$
	Then for any $\eps>0$, there exists a $T_1>0$, such that for any $t>T_1$,
	\begin{equation*}
		\max_{i<j,z\in\mS^n}|\k_i(z,t)-\k_j(z,t)|<\eps,
	\end{equation*}
	where $\k(z,t)$ are the principal curvatures of the radial graph $\rho(z,t)$ at $(z,t)\in\mS^n\times[0,\infty).$
\end{lemma}
\begin{proof}
	In order to complete the proof of Lemma \ref{l4.10}, all that we have to show is that each subsequential limit is a slice independent of the subsequence as $t\rightarrow\infty$.
	
	The evolution of $W_0(\Om)$ is
	\begin{align*}
		\frac{d}{dt}W_0(\Om_t)=&\int_{\p\Omega_t}(\phi'-up_2^\frac{1}{2})d\mu_t=\int_{\p\Omega_t}(\phi'-up_1)d\mu_t+\int_{\p\Omega_t}u(p_1-p_2^\frac{1}{2})d\mu_t\ge0.
	\end{align*}
	Since $W_0$ is obviously bounded, we have
	$$\int_{0}^\infty\int_{M_t}u(p_1-p_2^\frac{1}{2})d\mu dt<\infty,$$
	and hence
	$$\int_{M_t}u(p_1-p_2^\frac{1}{2})d\mu\rightarrow0.$$
	We have established the uniform a priori estimates for all the derivatives of $\rho$ both with respect to $z$ and $t$ for any order and also $0<\frac{1}{C_{8}}\le u\le C_{8}$ and $0<\frac{1}{C_{9}}\le p_1+p_2^\frac{1}{2}\le C_{9}$ uniformly, $W_0$ is uniformly continuous with respect to $t$ and $d\mu_t$ is also uniformly bounded from both up and below. This implies that
	$$0\le p_1-p_2^\frac{1}{2}\le \eps'.$$
	Thus
	$$\sum_{ i<j}(\k_i-\k_j)^2=C(n)(p_1^2-p_2)=C(n)(p_1+p_2^\frac{1}{2})(p_1-p_2^\frac{1}{2})\le C_{10}\eps'.$$
	Then for any $\eps>0$, there exists a $T_1>0$, such that for any $t>T_1$,
	\begin{equation*}
		\max_{i<j,z\in\mS^n}|\k_i(z,t)-\k_j(z,t)|\le\(\sum_{ i<j}(\k_i-\k_j)^2\)^\frac{1}{2}<\eps.
	\end{equation*}
\end{proof}
\begin{lemma}\label{l4.11}
	Let $\g(x,t)$ be a smooth solution to (\ref{x2.18}). Then there are positive constants $C_{11}$ and $C_{12}$ depending on the initial hypersurface,  such that
	$$	\max_{\mS^n}\vert D\g\vert^2\leq C_{11}e^{-C_{12}t},\;\;\forall t>T_1,$$
	where $T_1>0$ is defined as in Lemma \ref{l4.10}.
\end{lemma}
\begin{proof}
	Let $\eps,\delta>0$ be small enough constants to be determined and $T_1>0$ be defined as in Lemma \ref{l4.10}. For any $T>T_1,$ Consider the auxiliary function $O=\frac{1}{2}\vert D\g\vert^2$. Assume $O$ achieves maximum on the interval $[T,T+\delta]$ at point $P$. At $P$,
	\begin{gather*}
		D\omega=0,\\
		0=D_iO=\sum_{l}\g_{li}\g^{l},\\
		0\geq D_{ij}^2O=\sum_{l}\g_{i}^l\g_{lj}+\sum_{l}\g^{l}\g_{lij},
	\end{gather*}
	where $\g^l=\g_ke^{kl},\g_{i}^l=\g_{ik}e^{kl}$. We mention that $\g_{ij}$ is symmetric.
	
	At the critical point $P$, we can simplify the Weingarten tensor $h_j^i=\frac{1}{\om\phi}(-\g^i_{j}+\phi'\delta_j^i)$, where $\g^i_{j}=e^{ik}\g_{kj}$ and $[\g_{ij}]$ is symmetric matrix. By rotating the orthonormal coordinates, we can assume $\g_1=|D\g|$ and $\g_j=0$ for $2\le j\le n$. Then by critical point condition, $\g_{1j}=0$ for all $j=1,\cdots,n$. Now fixing the $i=1$ direction, we can diagonalize the rest $(n-1)\times(n-1)$ matrix. Finally, $\g_{ij}$ becomes a diagonal matrix with $\g_{11}=0$. Since the Weingarten tensor $h_j^i=\frac{1}{\om\phi}(-\g^i_{j}+\phi'\delta_j^i)$ at $P$, it is also diagonalized so the eigenvalues are $h_i^i$ which are also the principal curvatures. Thus at $P$, the principal curvature
	$$\k_i=h_i^i=\frac{1}{\om\phi}(-\g^i_{i}+\phi').$$
	By Lemma \ref{l4.10}, we have
	$$|\g_{ii}-\g_{jj}|\le\eps C_{13},$$
	where $C_{13}$ is a uniform constant and we have used the uniformly boundedness of $\rho$ and $\om$. Since $\g_{11}=0$, we have $|\g_{jj}|=|\g_{11}-\g_{jj}|\le\eps C_{13}$ for $2\le j\le n$.
	
	We rewrite the flow equation (\ref{x2.18}),
	\begin{equation}\label{4.11}
		\begin{split}
			\p_t\gamma=&\(\phi'-uF(\frac{1}{\phi\omega}(\phi'\delta^i_j-\widetilde{g}^{ik}\gamma_{kj}))\)\frac{\omega}{\phi}\\
			=&\dfrac{\om\phi'}{\phi}-\frac{F(\phi'\delta^i_j-\widetilde{g}^{ik}\gamma_{kj})}{\om\phi}=:G(\g,D\g,D^2\g),
		\end{split}
	\end{equation}
	where $\widetilde{g}^{ik}=e^{ik}-\frac{\gamma^i\gamma^k}{\omega^2}$. For convenience, we let $b^i_j=\phi'\delta^i_j-\widetilde{g}^{ik}\gamma_{kj}$ and $[b^i_j]$ is diagonal. We denote the eigenvalues of $[b^i_j]$ by $\widetilde{\k}$. Then $\widetilde{\k_i}\le|\widetilde{\k_i}|\le\phi'+|\g_{ii}|\le\phi'+\eps C_{13}$. Meanwhile, $\widetilde{\k_i}=\phi'-\g_{ii}\ge\phi'-\eps C_{13}.$
	For $\forall i$, $\phi'-\eps C_{13}\le\widetilde{\k_i}\le\phi'+\eps C_{13}$ means
	\begin{equation}\label{F}
		\phi'-\eps C_{13}\le F\le\phi'+\eps C_{13},
	\end{equation}
	where we mention that $F(1,\cdots,1)=1$.
	
	At $P$, $\widetilde{g}^{ij}$ is diagonal with
	$$\widetilde{g}^{11}=\frac{1}{\om^2},\qquad \widetilde{g}^{ii}=1 \text{ for }i\neq1.$$
	For simplicity, we denote by $F=F(\phi'\delta^i_j-\widetilde{g}^{ik}\gamma_{kj})=F(b^i_j)$ and $F^j_i$ is the derivative of $F$ with respect to its argument. We also denote by $G^{ij}:=\frac{\p G}{\p\g_{ij}}, G^{\g_l}:=\frac{\p G}{\p\g_{l}}, G^\g:=\frac{\p G}{\p\g}$.
	Let $\cL'=\p_t-G^{ij}D_iD_j$ be the parabolic operator. Using the Ricci identities $D_l\g_{ij}=D_j\g_{li}+\e_{il}\g_j-\e_{ij}\g_l$ on $\mS^n$, we get
	\begin{equation}\label{5.2}
		\begin{split}
			\cL'O=&-G^{ij}\g_{ik}\g^k_j-G^{ij}e_{ij}|D\g|^2+G^{ij}\g_{i}\g_j\\
			&+G^{\g_l}(|D\g|^2)_l+G^\g|D\g|^2\\
			=&-G^{ij}\g_{ik}\g^k_j-G^{ij}e_{ij}|D\g|^2+G^{ij}\g_{i}\g_j+G^\g|D\g|^2
		\end{split}
	\end{equation}
	by the critical point condition. We can also calculate
	\begin{equation}\label{4.13}
		\begin{split}
			-G^{ij}\g_{ik}\g^k_j=&-\dfrac{1}{\phi \om}F_l^i\widetilde{g}^{lj}\g_{ik}\g^k_j=-\dfrac{1}{\phi \om}\sum_{k\ge2}F^{kk}\g_{kk}^2,\\
			-G^{ij}e_{ij}|D\g|^2+G^{ij}\g_{i}\g_j=&-\dfrac{1}{\phi \om}F_l^i\widetilde{g}^{lj}(e_{ij}|D\g|^2-\g_{i}\g_j)=-\dfrac{1}{\phi \om}\sum_{k\ge2}F^{kk}|D\g|^2,\\
			G^\g|D\g|^2=&\(-\dfrac{\om}{\phi }+\dfrac{\phi}{\om}F_i^i+\dfrac{F\phi'}{\phi\om}\)|D\g|^2.
		\end{split}
	\end{equation}
	We have used notations $F^i_i=\sum_{i\ge1}F^i_i$.
	Substituting (\ref{4.13}) into (\ref{5.2}), we get
	\begin{equation}\label{4.14}
		\begin{split}
			\frac{\cL'O}{|D\g|^2}=&-\dfrac{1}{\phi \om|D\g|^2}\sum_{k\ge2}F^{kk}\g_{kk}^2-\dfrac{1}{\phi \om}\sum_{k\ge2}F^{kk}-\dfrac{\om}{\phi }+\dfrac{\phi}{\om}F_i^i+\dfrac{F\phi'}{\phi\om}.
		\end{split}
	\end{equation}
	We recall that $\phi'-\eps C_{13}\le F\le\phi'+\eps C_{13}$ and $\widetilde{\k_i}\le\phi'+\eps C_{13}.$ Thus
	$$F_i^i=\frac{p_1}{p_2^\frac{1}{2}}=\frac{p_1}{F}\le\frac{\phi'+\eps C_{13}}{\phi'-\eps C_{13}}.$$
	By the $C^2$ estimates, we have $\sum_{k\ge2}F^{kk}\ge C_{14}$. Then
	\begin{equation}\label{4.15}
		\begin{split}
			&-\dfrac{1}{\phi \om}\sum_{k\ge2}F^{kk}-\dfrac{\om}{\phi }+\dfrac{\phi}{\om}F_i^i+\dfrac{F\phi'}{\phi\om}\\
			\le&-\dfrac{C_{14}}{\phi\om}-\dfrac{\om}{\phi}+\dfrac{\phi(\phi'+\eps C_{13})}{\om(\phi'-\eps C_{13})}+\dfrac{\phi'(\phi'+\eps C_{13})}{\phi\om}\\
			=&\dfrac{-C_{13}\phi'-\phi'|D\g|^2+\eps\(C_{13}C_{14}+C_{13}\om^2+C_{13}\phi^2-\eps C_{13}^2\phi'\)}{\phi\om(\phi'-\eps C_{13})}.
		\end{split}
	\end{equation}
	Now if we plug (\ref{4.15}) into (\ref{4.14}) and let $\eps$ be a small enough constant, we have
	\begin{equation}
		\cL'O\le-C_{15}O,
	\end{equation}
	where $C_{15}>0$ is a constant independent of time $t$. The standard maximum principle implies the exponential convergence.
\end{proof}
Then we can complete the proof of Theorem \ref{t4.1}.

\begin{proof of theorem 4.1}
	By Lemma \ref{l4.11}, we have that $\vert D\rho\vert\to0$ exponentially as $t\to\infty$. Hence by the interpolation and the a priori estimates, we can get that $\rho$ converges exponentially to a constant in the $C^\infty$ topology as $t\to\infty$.
\end{proof of theorem 4.1}

\section{Some families of geometric inequalities}
We first prove some monotone quantities. According to these monotonic quantities, we can directly derive some families of geometric inequalities in sphere. The following variational formula has been derived in \cite{WZ}.
\begin{lemma}\cite{WZ}
	Let $M_t$ be a smooth family of closed hypersurfaces in the space form satisfying $\p_tX=\sF\nu$. For all integers $k=1,\cdots,n$ we have
	\begin{equation}\label{x5.1}
		\frac{d}{dt}\(\int_{M_t}\Phi p_kd\mu+kW_{k-1}(\Om_t)\)=\int_{M_t}\((k+1)up_k+(n-k)\Phi p_{k+1}\)\sF d\mu_t.
	\end{equation}
\end{lemma}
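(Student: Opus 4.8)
The plan is to prove (\ref{x5.1}) by differentiating each term separately, using the evolution equations of Lemma \ref{l3.2}, the symmetric-function identities of Lemma \ref{l2.4}, and the Hessian formula of Lemma \ref{l2.2}.

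First I would compute $\tfrac{d}{dt}\int_{M_t}\Phi p_k\,d\mu_t$. Along $\p_tX=\sF\nu$ one has $\p_t\Phi=\la\bar\nn\Phi,\sF\nu\ra=\sF u$, since $\bar\nn\Phi=V$ and $u=\la V,\nu\ra$, while $\p_td\mu_g=\sF H\,d\mu_g=n\sF p_1\,d\mu_g$. Regarding $p_k$ as a function of the Weingarten map $h^i_j$, I would derive $\p_th^i_j=-\nn^i\nn_j\sF-\sF(h^2)^i_j-K\sF\delta^i_j$ from the evolutions of $g_{ij},g^{ij},h_{ij}$, and then contract with $p_k^{ij}:=\p p_k/\p h^i_j$, invoking the identities $p_k^{ij}g_{ij}=kp_{k-1}$, $p_k^{ij}h_{ij}=kp_k$ and $p_k^{ij}(h^2)_{ij}=np_1p_k-(n-k)p_{k+1}$ from Lemma \ref{l2.4}. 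The crucial observation here is that the term $-\sF\,np_1p_k$ in $\p_tp_k$ cancels against the term $\Phi p_k\cdot n\sF p_1$ coming from $\p_td\mu_g$, so that
\begin{equation*}
\frac{d}{dt}\int_{M_t}\Phi p_k\,d\mu_t=\int_{M_t}\Big(\sF up_k-\Phi\,p_k^{ij}\nn_i\nn_j\sF+(n-k)\Phi p_{k+1}\sF-Kk\,\Phi p_{k-1}\sF\Big)\,d\mu_t.
\end{equation*}

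Next I would integrate the second term by parts twice. Because the Newton tensor $T_{k-1}$ — a constant multiple of $(p_k^{ij})$ after raising an index — is divergence free in the space form (the fact recalled just before Proposition \ref{xp}), both derivatives can be moved onto $\Phi$, so $\int_{M_t}\Phi\,p_k^{ij}\nn_i\nn_j\sF\,d\mu_t=\int_{M_t}p_k^{ij}(\nn_i\nn_j\Phi)\sF\,d\mu_t$; Lemma \ref{l2.2} then gives $p_k^{ij}\nn_i\nn_j\Phi=p_k^{ij}(\phi'g_{ij}-uh_{ij})=k(\phi'p_{k-1}-up_k)$. Substituting this and using the identity $\phi'+K\Phi\equiv1$ (immediate from the definitions of $\phi$ and $\Phi$) to combine $-k\phi'p_{k-1}-Kk\Phi p_{k-1}=-kp_{k-1}$, I arrive at
\begin{equation*}
\frac{d}{dt}\int_{M_t}\Phi p_k\,d\mu_t=\int_{M_t}\big((k+1)up_k-kp_{k-1}+(n-k)\Phi p_{k+1}\big)\sF\,d\mu_t .
\end{equation*}
Adding $k\tfrac{d}{dt}W_{k-1}(\Om_t)=k\int_{M_t}p_{k-1}\sF\,d\mu_t$, the first variation of the quermassintegral, cancels the $-kp_{k-1}$ term and gives (\ref{x5.1}); for $k=n$ one reads $p_{n+1}=0$.

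I expect no conceptual obstacle here — the argument is essentially bookkeeping. The only delicate points are keeping the signs in $\p_th^i_j$ and $\p_tp_k$ straight, spotting the $np_1p_k$ cancellation, and executing the two integrations by parts against the divergence-free Newton tensor without boundary terms (legitimate since $M_t$ is closed); the $K$-dependence then disappears precisely because $\phi'+K\Phi\equiv1$. As a cross-check, valid when $K\neq0$, one can bypass $\p_tp_k$ altogether: using $\phi'=1-K\Phi$ write $\Phi p_k=\tfrac1K(p_k-\phi'p_k)$ and combine the evolution of $\int_{M_t}p_k\,d\mu_t$ with the weighted evolution (\ref{1.6}) (suitably reindexed), which reproduces the same formula in $\mH^{n+1}$ and $\mS^{n+1}$.
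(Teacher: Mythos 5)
The paper does not actually prove this lemma — it is quoted from \cite{WZ} — so there is no internal proof to compare with; judged on its own, the core of your computation is correct. The ingredients all check out: $\p_t\Phi=\sF u$, $\p_t d\mu=n\sF p_1d\mu$, $\p_th^i_j=-\nn^i\nn_j\sF-\sF(h^2)^i_j-K\sF\delta^i_j$, the contractions of Lemma \ref{l2.4} (note these should be taken against $\delta^i_j$, $h^i_j$, $(h^2)^i_j$ rather than the lowered-index tensors — a harmless slip), the cancellation of the $np_1p_k$ terms, the two integrations by parts against the divergence-free Newton tensor on a closed hypersurface, the identity $p_k^{ij}\nn_i\nn_j\Phi=k(\phi'p_{k-1}-up_k)$ from Lemma \ref{l2.2}, and $\phi'+K\Phi\equiv1$. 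This correctly yields $\frac{d}{dt}\int_{M_t}\Phi p_k\,d\mu_t=\int_{M_t}\big((k+1)up_k-kp_{k-1}+(n-k)\Phi p_{k+1}\big)\sF\,d\mu_t$, and your cross-check via (\ref{1.6}) is also valid when $K\neq0$.

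The step you cannot treat as routine is the last one. You invoke $\frac{d}{dt}W_{k-1}(\Om_t)=\int_{M_t}p_{k-1}\sF\,d\mu_t$ as ``the first variation of the quermassintegral'', but the variational property displayed in the introduction of this paper is $\frac{d}{dt}W_k(\Om_t)=\frac{n+1-k}{n+1}\int_{M_t}\sF p_k\,d\mu_t$, which gives $\frac{d}{dt}W_{k-1}=\frac{n+2-k}{n+1}\int p_{k-1}\sF\,d\mu_t$; with that normalization the identity (\ref{x5.1}) is false as stated for $k\ge2$, the correct coefficient of $W_{k-1}$ being $\frac{k(n+1)}{n+2-k}$ rather than $k$. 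The formula you need, $\frac{d}{dt}W_m=\int p_m\sF\,d\mu_t$, corresponds to the normalization implicit in (\ref{r1}) (differentiating (\ref{r1}) with $\frac{d}{dt}\int p_k\,d\mu=\int((n-k)p_{k+1}-Kkp_{k-1})\sF\,d\mu$ shows the two are compatible, and the two conventions differ index by index by the factor $\frac{n+1}{n+1-m}$); it is also the convention under which the Remark after Theorem \ref{t3} and the applications in Section 4 are coherent. So your algebra is fine, but the coefficient $k$ in (\ref{x5.1}) is tied to a specific normalization of $W_{k-1}$: you must state which definition of the quermassintegrals you are using and justify its first variation, rather than appeal to a variational formula that conflicts with the one printed in the paper.
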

To get more comprehensive results, we also need to use the following convergence result, which has been proved in \cite{GL}. The convexity preserved has been proved in \cite{CS,BGL}.
\begin{theorem}\cite{GL,CS,BGL}
Let $M_0$ be a smooth compact, star-shaped hypersurface with
respect to the origin in $\mS^{n+1}$. Then the
evolution equation (\ref{x1.7}) with $F=p_1$ has a smooth solution for all time and converges exponentially to a geodesic slice in the $C^\infty$-topology. If $M_0$ is strictly convex, $M_t$ can preserve convexity.
\end{theorem}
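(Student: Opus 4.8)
The plan is to run the standard scheme for star-shaped flows in space forms. Since $M_0$ is star-shaped and this is preserved along the flow, write $M_t$ as a radial graph of $\rho(\cdot,t)$ over $\mS^n$; by the formulas of Section~2.3 and $\p_t\rho=\mathscr{F}\omega$ with $\mathscr{F}=\phi'-up_1$, equation (\ref{x1.7}) with $F=p_1$ reduces to a scalar equation on $\mS^n$. The key structural feature here, as noted in the introduction, is that for $F=p_1$ the quantity $up_1$ is \emph{linear} in the Hessian of $\rho$ (via (\ref{2.14}) and (\ref{2.16})), so the $\gamma$-equation is quasilinear of mean-curvature type: none of the $C^2$ difficulties that force the elaborate auxiliary function of Lemma~\ref{l4.3} in the flow (\ref{x1.5}) arise, and higher regularity will come from standard quasilinear parabolic theory once $C^0$ and $C^1$ bounds are in place. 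I would also record that this flow is ``locally constrained'': $\frac{d}{dt}\Vol(\Om_t)=\int_{M_t}(\phi'-up_1)\,d\mu_t=0$ by the Minkowski formula of Proposition~\ref{xp}.

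First the $C^0$ estimate. At a spatial maximum of $\rho$ one has $D\rho=0$ and $D^2\rho\le0$, hence $\omega=1$, $u=\phi$, and by (\ref{3.3}) $h^i_j\ge\frac{\phi'}{\phi}\delta^i_j$, so $p_1\ge\frac{\phi'}{\phi}$ and $\mathscr{F}=\phi'-\phi p_1\le0$; symmetrically $\mathscr{F}\ge0$ at a spatial minimum. The maximum principle then gives $\min_{\mS^n}\rho_0\le\rho(\cdot,t)\le\max_{\mS^n}\rho_0$, so the flow stays in a fixed compact subinterval of $[0,\pi)$ and the enclosed domain contains a fixed geodesic ball. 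For the $C^1$ estimate I would compute the evolution of $\omega$ (equivalently of $\tfrac{1}{2}|D\gamma|^2$, or a lower bound for $u$) and apply the maximum principle; after using the $C^0$ confinement the reaction terms close with the right sign, so $|D\rho|\le C$. With $C^0$ and $C^1$ bounds the $\gamma$-equation is uniformly parabolic, and the parabolic theory of Krylov and Nirenberg \cite{KNV,LN} followed by parabolic Schauder estimates yields $C^{2,\alpha}$ and then $C^\infty$ bounds, hence long-time existence.

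For the convexity-preservation statement I would use the evolution equation $\p_t h_{ij}=-\nn_i\nn_j\mathscr{F}+\mathscr{F}(h^2)_{ij}-K\mathscr{F}g_{ij}$ from Lemma~\ref{l3.2} with $K=1$, pass to $h^i_j$, and run a tensor maximum principle on the smallest eigenvalue of $h^i_j$ (or on $h^i_j-c\,\delta^i_j$): one checks that the zeroth-order reaction term $\mathscr{F}\big((h^2)^i_j-\delta^i_j\big)$ has the right sign once the other estimates are available, so strict convexity is preserved, and as a by-product one obtains a uniform two-sided curvature pinch in the convex case. Finally, for convergence I would mimic Lemma~\ref{l5.3}: for $O=\tfrac{1}{2}|D\gamma|^2$ show that at an interior spatial maximum the associated parabolic operator satisfies $\mathcal{L}'O\le-cO$, giving $\max_{\mS^n}|D\gamma|^2\le Ce^{-ct}$; interpolation with the a priori estimates then forces $\rho\to\mathrm{const}$ exponentially in $C^\infty$, i.e.\ $M_t\to$ geodesic slice.

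The main obstacle I anticipate is the sign bookkeeping in the $C^1$ estimate and in the convexity argument: since the ambient curvature is $K=+1$, several reaction terms carry the ``wrong'' sign, and one must use the precise form $\mathscr{F}=\phi'-up_1$ together with the $C^0$ confinement away from $\rho=\pi$ (so that $\phi,\phi',\phi''$ and $u$ are all controlled) to make the maximum-principle estimates work. The exponential decay in the convergence step, and the higher-order bootstrap, are comparatively routine once these are in hand.
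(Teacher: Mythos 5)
Note first that the paper itself contains no proof of this theorem: it is quoted as a known result, with the long-time existence and exponential convergence attributed to \cite{GL} and the preservation of convexity to \cite{CS,BGL}, so the comparison is really with those references. Your outline of the existence/convergence half follows the same route as \cite{GL}: for $F=p_1$ the radial-graph equation is quasilinear, namely $\p_t\gamma=\frac{1}{n\phi\omega}\widetilde g^{ij}\gamma_{ij}+\frac{\phi'}{\phi}\frac{|D\gamma|^2}{\omega}$, the $C^0$ bound is exactly as in your sketch, the heart of the matter is the maximum-principle estimate giving exponential decay of $|D\gamma|^2$ (it closes because of the structural identity $(\phi'/\phi)'=-\phi^{-2}$, valid in all space forms), and then Krylov--Safonov/Schauder theory bootstraps to $C^\infty$. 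So that half is a correct, if compressed, reproduction of the known argument; the one step you defer (``the reaction terms close with the right sign'') is precisely where all the work in \cite{GL} lies.

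The genuine gap is the convexity statement. Your plan --- a tensor maximum principle on the smallest eigenvalue of $h^i_j$, checking ``the sign of the zeroth-order reaction term $\mathscr{F}\big((h^2)^i_j-\delta^i_j\big)$'' --- does not engage the actual difficulty. The speed $\mathscr{F}=\phi'-up_1$ has no pointwise sign along the flow (by the Minkowski formula it integrates to zero, so it changes sign unless $M_t$ is already a centered geodesic sphere), hence neither $\mathscr{F}\big((h^2)_{ij}-g_{ij}\big)$ nor the ambient term $-K\mathscr{F}g_{ij}$ is signed. Moreover the term $-\nn_i\nn_j\mathscr{F}$ in Lemma \ref{l3.2} contributes, besides the good second-order part $\frac{u}{n}\Delta h_{ij}+\cdots$, the first-order terms $\nn_iu\,\nn_jp_1+\nn_ju\,\nn_ip_1$ and $p_1\nn_i\nn_ju$; at a point and null direction $e_1$ where the smallest principal curvature touches zero one has $\nn_k h_{11}=0$ but not $\nn_1 p_1=0$, so these terms are exactly what obstructs a naive maximum principle. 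Handling them requires either Andrews' tensor maximum principle with a verified null-vector/gradient condition or a constant-rank-type argument exploiting the special structure in $\mS^{n+1}$; this is the content of \cite{CS} and \cite{BGL}, and it is the part that \cite{GL} did not prove. The claimed ``uniform two-sided curvature pinch'' by-product is likewise unsubstantiated. As written, your proposal covers (modulo the deferred gradient computation) the star-shaped existence and convergence statement, but not the convexity preservation; for that you should either cite \cite{CS,BGL}, as the paper does, or reproduce one of their arguments in detail.
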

Some monotone quantities along flow (\ref{x1.7}) can be found.
\begin{lemma}\label{l5.6}
	Suppose $M_t$ is a smooth, closed, star-shaped solution to the flow (\ref{x1.7}) with $F=p_1$ in sphere. Then the following hold:
	
	$(\rmnum1)$ $W_0$ is invariant along this flow;
	
	$(\rmnum2)$ $W_{-1}^{\phi'}$ is monotone increasing and $W_{-1}^{\phi'}$ is a constant function if and only if $M_t$ is a slice for each $t$.

	Suppose $M_t$ is a smooth, closed, strictly convex solution to the flow (\ref{x1.7}) with $F=p_1$ in sphere. Then the following hold:
	
	$(\rmnum3)$ $W_m$ is monotone decreasing with $1\le m\le n-1$ and $W_m$ is a constant function if and only if $M_t$ is a slice for each $t$;
	
	$(\rmnum4)$ $\int_{M_t}\Phi p_kd\mu_t+kW_{k-1}(\Om_t)$ is monotone decreasing with $0\le k\le n-2$ and $\int_{M_t}\Phi p_kd\mu+kW_{k-1}(\Om_t)$ is a constant function if and only if $M_t$ is a slice for each $t$.
\end{lemma}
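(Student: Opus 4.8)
The plan is to feed the speed $\mathcal F=\phi'-up_1$ of flow (\ref{x1.7}) (with $F=p_1$) into the general variation formulas already at hand — the quermassintegral formula $\frac{d}{dt}W_m(\Om_t)=\frac{n+1-m}{n+1}\int_{M_t}\mathcal F\,p_m\,d\mu_t$, formula (\ref{1.6}) for $W_{k-1}^{\phi'}$ with $K=1$, and (\ref{x5.1}) — and then to reduce each sign claim to the Minkowski identity of Proposition \ref{xp}, the Newton--MacLaurin inequalities of Lemma \ref{l2.5}, and two integration-by-parts identities against the divergence-free Newton tensor. Parts $(\rmnum1)$ and $(\rmnum2)$ come out immediately: $\frac{d}{dt}W_0(\Om_t)=\int_{M_t}(\phi'-up_1)\,d\mu_t=0$ by the $k=0$ case of Proposition \ref{xp}; and since $W_{-1}^{\phi'}(\Om)=\int_M u\,d\mu$, formula (\ref{1.6}) with $k=0$ gives $\frac{d}{dt}W_{-1}^{\phi'}(\Om_t)=(n+1)\int_{M_t}\phi'(\phi'-up_1)\,d\mu_t$, and writing $\phi'-up_1=\tfrac1n\Delta\Phi$ (trace of $\nabla_i\nabla_j\Phi=\phi'g_{ij}-uh_{ij}$), integrating by parts on the closed $M_t$ and using $\nabla\phi'=-\phi\,\nabla\rho$, $\nabla\Phi=\phi\,\nabla\rho$ in $\mathbb S^{n+1}$ turns this into $\tfrac{n+1}{n}\int_{M_t}\phi^2|\nabla\rho|^2\,d\mu_t\ge0$, which vanishes exactly when $M_t$ is a slice. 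Neither part uses convexity, matching the star-shaped hypothesis.

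For $(\rmnum3)$ one has $\frac{d}{dt}W_m(\Om_t)=\frac{n+1-m}{n+1}\int_{M_t}(\phi'-up_1)p_m\,d\mu_t$; Proposition \ref{xp} rewrites $\int_{M_t}\phi'p_m$ as $\int_{M_t}up_{m+1}$, and the Newton--MacLaurin inequality $p_1p_m\ge p_{m+1}$ (valid for $1\le m\le n-1$ on a strictly convex $M_t$) together with $u>0$ gives $\int_{M_t}up_1p_m\ge\int_{M_t}up_{m+1}$, hence $\frac{d}{dt}W_m(\Om_t)\le0$; the equality case of Newton--MacLaurin forces $M_t$ umbilic, hence a geodesic slice.

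Part $(\rmnum4)$ is the substantive one. Formula (\ref{x5.1}) — which also covers $k=0$, since $\frac{d}{dt}\int_{M_t}\Phi\,d\mu_t=\int_{M_t}(u+n\Phi p_1)\mathcal F\,d\mu_t$ — gives
\[
\frac{d}{dt}\Big(\int_{M_t}\Phi p_k\,d\mu_t+kW_{k-1}(\Om_t)\Big)=\int_{M_t}\big((k+1)up_k+(n-k)\Phi p_{k+1}\big)(\phi'-up_1)\,d\mu_t .
\]
The key step is to establish, for strictly convex closed hypersurfaces and $0\le j\le n-1$, two weighted Minkowski inequalities. Multiplying the identity $p_{j+1}^{ab}\nabla_a\nabla_b\Phi=(j+1)(\phi'p_j-up_{j+1})$ by $\Phi$ and integrating by parts (the tensor $p_{j+1}^{ab}$ being a constant multiple of the divergence-free Newton tensor $T_j$) gives
\[
\int_M\Phi(\phi'p_j-up_{j+1})\,d\mu=-\tfrac{1}{j+1}\int_M p_{j+1}^{ab}\,\nabla_a\Phi\,\nabla_b\Phi\,d\mu\le 0,
\]
while multiplying instead by $u$ and using $\nabla_a u=h_{ac}\nabla^c\Phi$ gives $\int_M u(\phi'p_j-up_{j+1})\,d\mu=-\int_M p_{j+1}^{ab}h_b^c\,\nabla_a\Phi\,\nabla_c\Phi\,d\mu\le0$; the two right-hand sides are nonpositive because in a principal frame the relevant quadratic forms are $\sum_i\frac{\partial p_{j+1}}{\partial\kappa_i}(\nabla_i\Phi)^2$ and $\sum_i\frac{\partial p_{j+1}}{\partial\kappa_i}\kappa_i(\nabla_i\Phi)^2$, both nonnegative on the positive cone.

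Finally I would regroup the integrand of the displayed derivative, adding and subtracting $u^2p_{k+1}$ and $\Phi up_{k+2}$, so that the right-hand side becomes
\begin{multline*}
(k+1)\!\int_{M_t}\! u(\phi'p_k-up_{k+1})\,d\mu_t+(k+1)\!\int_{M_t}\! u^2(p_{k+1}-p_1p_k)\,d\mu_t\\
+(n-k)\!\int_{M_t}\!\Phi(\phi'p_{k+1}-up_{k+2})\,d\mu_t+(n-k)\!\int_{M_t}\!\Phi u(p_{k+2}-p_1p_{k+1})\,d\mu_t ,
\end{multline*}
where the first and third integrals are $\le0$ by the two weighted Minkowski inequalities applied at $j=k$ and $j=k+1$ respectively, and the second and fourth are $\le0$ by Newton--MacLaurin (using $u>0$ and $\Phi\ge0$). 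Applying the third inequality at $j=k+1$ needs $k+1\le n-1$, which is exactly the stated range $0\le k\le n-2$. The rigidity in $(\rmnum3)$ and $(\rmnum4)$ reduces once more to the umbilic case, which in $\mathbb S^{n+1}$ is a geodesic slice (and slices are fixed points of (\ref{x1.7}), since $up_1=\phi'$ on a geodesic sphere). The main obstacle is precisely setting up these two Newton-tensor integration-by-parts identities in the right form and then discovering the regrouping of the four terms that makes each summand manifestly nonpositive within the admissible range; the restriction $k\le n-2$ is forced by the last index $j$ for which the identity $p_{j+1}^{ab}\nabla_a\nabla_b\Phi=(j+1)(\phi'p_j-up_{j+1})$ remains valid.
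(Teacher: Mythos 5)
Your proposal is correct and takes essentially the same route as the paper: the Minkowski formula for $(\rmnum1)$, the $\nabla\phi'=-\nabla\Phi$ divergence argument for $(\rmnum2)$, Newton--MacLaurin plus the Minkowski formula for $(\rmnum3)$, and for $(\rmnum4)$ the variational formula (\ref{x5.1}) combined with Newton--MacLaurin and integration by parts against the divergence-free Newton tensor via $p_{j+1}^{ab}\nabla_a\nabla_b\Phi=(j+1)(\phi'p_j-up_{j+1})$ and $\nabla_a u=h_a^c\nabla_c\Phi$ --- your two ``weighted Minkowski inequalities'' are exactly the two integrals the paper bounds inline after applying Newton--MacLaurin first. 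The only blemish is a harmless missing factor $\tfrac{1}{j+1}$ in your $u$-weighted identity, which does not affect the sign argument.
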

\begin{proof}
	$(\rmnum{1})$: We can directly calculate
	\begin{align*}
		\frac{d}{dt}W_0=\int_{M_t}(\phi'-up_1)d\mu_t=0,
	\end{align*}
	where we used the Minkowski formula. Thus
	$W_0$ is invariant.
	
$(\rmnum{2})$: Recall (\ref{1.6}).
\begin{align*}
	\frac{1}{n+1}\frac{d}{dt}W_{-1}^{\phi'}=&\int_{M_t}\phi'\(\phi'-up_1\)d\mu_t\\
	=&\frac{1}{n}\int_{M_t}\phi'\Delta\Phi d\mu_t=\frac{1}{n}\int_{M_t}|\nn\Phi|^2 d\mu_t\ge0,
\end{align*}
where we have used  $\nn\phi'=-\nn\Phi$ in sphere. Thus
$W_{-1}^{\phi'}$ is a constant function if and only if  $W_{-1}^{\phi'}$ is a slice.
	
	$(\rmnum{3})$: For $1\le m\le n-1$,
	\begin{align*}
		\frac{n+1}{n+1-m}\frac{d}{dt}W_m=\int_{M_t}\left(\phi'p_m-up_1p_m\right)d\mu_t\le\int_{M_t}(\phi'p_m-up_{m+1})d\mu_t=0,
	\end{align*}
	where we used the Newton-MacLaurin inequalities. Thus
	$W_m$ is a constant function if and only if  $M_t$ is a slice.

	$(\rmnum{4})$: For $0\le k\le n-2$, by (\ref{x5.1}),
	\begin{align*}
		&\frac{d}{dt}\(\int_{M_t}\Phi p_kd\mu_t+kW_{k-1}(\Om_t)\)\\
		=&\int_{M_t}\((k+1)up_k+(n-k)\Phi p_{k+1}\)\(\phi'-up_1\) d\mu_t\\
		=&(k+1)\int_{M_t}u\(\phi'p_k-up_1p_k\)d\mu_t+(n-k)\int_{M_t}\Phi\(\phi'p_{k+1}-up_1p_{k+1}\)d\mu_t\\
		\le&(k+1)\int_{M_t}u\(\phi'p_k-up_{k+1}\)d\mu_t+(n-k)\int_{M_t}\Phi\(\phi'p_{k+1}-up_{k+2}\)d\mu_t\\
		=&\int_{M_t}up_{k+1}^{ij}\nn_{ij}^2\Phi d\mu_t+\frac{n-k}{k+2}\int_{M_t}\Phi p_{k+2}^{ij}\nn_{ij}^2\Phi d\mu_t\\
		=&-\(\int_{M_t}p_{k+1}^{ii}\k_i(\nn_{i}\Phi)^2 d\mu_t+\frac{n-k}{k+2}\int_{M_t} p_{k+2}^{ii}(\nn_{i}\Phi)^2d\mu_t\)\\
		\le&0,
	\end{align*}
	where we have used $M_t$ is strictly convex, Newton-MacLaurin inequalities, $\nn_i u=\k_i\nn_i\Phi$ and $\nn^2_{ij}\Phi=\phi'g_{ij}-uh_{ij}$. Thus
	$\int_{M_t}\Phi p_kd\mu+kW_{k-1}(\Om_t)$ is a constant function if and only if  $\int_{M_t}\Phi p_kd\mu+kW_{k-1}(\Om_t)$ is a slice.
\end{proof}
However, due to the form of curvature flow (\ref{x1.7}), the cases $m=n$ in $(\rmnum{3})$ and $k=n-1,n$ in $(\rmnum{4})$ is missed in Lemma \ref{l5.6}. In detail, the Newton-MacLaurin inequalities will fail at these cases. To fill this gap, some monotone quantities can be derived along the inverse curvature flow 
\begin{equation}\label{x1.4}
\frac{\p}{\p t}X=\(\frac{1}{F}-\frac{u}{\phi'}\)\nu.
\end{equation}
The following convergence result has been proved in \cite{SX}.
\begin{theorem}\cite{SX}
	Let $X_0(M)$ be the embedding of a closed $n$-dimensional manifold $M$
	in $\mS^{n+1}$ such that $X_0(M)$ is strictly convex.  Then any solution $X_t$ of (\ref{x1.4}) with $F=\frac{p_k}{p_{k-1}}$ exists for all positive times and converges
	to a geodesic slice in the $C^\infty$-topology.
\end{theorem}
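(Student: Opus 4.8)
The statement is established in \cite{SX}; were I to reconstruct the proof I would follow the standard a priori estimate scheme for inverse-type curvature flows in space forms, which runs closely parallel to the proof of Theorem \ref{t2} in Section 3, with strict convexity replacing the Assumption \ref{a1.1}-type hypotheses used there. Since $M_0$ is strictly convex in $\mS^{n+1}$, it bounds a convex body contained in an open hemisphere, hence is a radial graph $\rho(z)$ over $\mS^n$ with $\rho<\pi/2$; in particular $\phi=\sin\rho>0$ and $\phi'=\cos\rho>0$ stay positive. By Section 2.3 the flow (\ref{x1.4}) is then equivalent to the scalar parabolic equation $\p_t\rho=(\frac1F-\frac{u}{\phi'})\om$ on $\mS^n$, with $F=F(h^i_j)$ and $u=\phi/\om$. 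The plan is to establish time-independent $C^0$, $C^1$ and $C^2$ bounds, deduce long-time existence and $C^\infty$ regularity from Krylov--Safonov/Nirenberg and Schauder theory (exactly as at the end of Section 3), and then prove exponential convergence to a geodesic slice.

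First I would prove the $C^0$ estimate by the maximum principle at the spatial extrema of $\rho$: at an interior maximum one has $D\rho=0$, $D^2\rho\le0$, hence $h^i_j=-\phi^{-2}e^{ik}\rho_{kj}+\phi^{-1}\phi'\delta^i_j\ge\phi^{-1}\phi'\delta^i_j$, so $F\ge\phi'/\phi$ by monotonicity and $F(1,\dots,1)=1$; since $\om=1$ and $u=\phi$ there, the speed $\frac1F-\frac{u}{\phi'}=\frac1F-\frac{\phi}{\phi'}\le0$, so $\rho_{\max}$ is non-increasing, and symmetrically $\rho_{\min}$ is non-decreasing. Thus $\rho$ stays in $[\rho_{\min}(0),\rho_{\max}(0)]\subset(0,\pi/2)$, the origin remains enclosed, and $\phi'$ is bounded below. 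Next I would show that strict convexity is preserved along (\ref{x1.4}) by a tensor maximum principle for $h^i_j$ (equivalently, a maximum principle for the smallest principal curvature) based on the evolution equation in Lemma \ref{l3.2}, using the concavity of $F=p_k/p_{k-1}$ and the inequality $F^j_i(h^2)^i_j\ge F^2$ of Proposition \ref{p2.8}(2). Given convexity, the support function satisfies $u\ge c>0$ because $\Om_t\supset B_{\rho_{\min}(0)}$, so $\om=\phi/u$ is bounded and the gradient estimate $|D\g|\le C$ is automatic; a lower bound $F\ge c>0$ is then obtained as in Lemma \ref{l4.2}, by bounding $\g_t$ (hence $1/F$) from above via the maximum principle.

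The main obstacle is the $C^2$ estimate, i.e.\ a uniform upper bound on the principal curvatures; I would run the Gerhardt--Urbas test-function argument exactly as in the proof of Lemma \ref{l4.3}. With $\theta=\log h_{11}+p(u)+N\rho$, $p(u)=-\log(u-\tfrac12\min u)$, and $N$ large, one diagonalizes at an interior maximum of $\theta$, computes $\widetilde{\cL}\theta$ for the parabolic operator attached to (\ref{x1.4}), uses the concavity inequalities (\ref{3.24})--(\ref{3.25}), splits into the cases $\k_n<-\eps_1\k_1$ and $\k_n\ge-\eps_1\k_1$, and absorbs the bad first- and zeroth-order terms by taking $N$ large; the delicate point — and the reason the structural conditions on $F=p_k/p_{k-1}$ are indispensable — is that the speed of (\ref{x1.4}) is inhomogeneous, the $u/\phi'$ term producing non-scaling lower-order contributions. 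Once $\k$ is confined to a fixed compact subset of $\Gamma_k$ with $F$ bounded away from $0$ and $\infty$, the equation is uniformly parabolic, and parabolic regularity theory yields long-time existence, $C^\infty$ bounds, and uniqueness.

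Finally, for convergence I would use that (\ref{x1.4}) preserves a suitable (weighted) quermassintegral of $\Om_t$ while a neighbouring one is monotone — these are precisely the monotonicity identities that later feed into Theorems \ref{t5.7} and \ref{t5.9} — which fixes the radius $r_\infty$ of the limiting slice. Exponential decay then follows from the auxiliary function $O=\tfrac12|D\g|^2$: a computation parallel to Lemma \ref{l5.3}, splitting into the cases $F>\phi'$ and $F\le\phi'$, gives $\cL'O\le-cO$ at the spatial maximum of $O$ for some $c>0$ independent of $t$, so $|D\g|\to0$ exponentially; together with the interpolation inequalities and the a priori estimates this upgrades to exponential convergence of $\rho$ to the constant $r_\infty$ in the $C^\infty$-topology, i.e.\ $M_t$ converges exponentially to a geodesic slice.
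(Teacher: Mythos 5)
The paper gives no proof of this statement at all---it is imported verbatim from \cite{SX}---so your deferral to \cite{SX} is precisely what the paper itself does, and in that sense your proposal takes the same route as the paper. Your appended reconstruction is a plausible outline in the spirit of Section 3, but it cannot be checked against anything in this paper, and two ingredients you invoke are not supplied here: no quermassintegral (weighted or otherwise) is actually preserved along (\ref{x1.4}) (Lemma \ref{l5.7} only yields monotone quantities, so the limit radius must be identified differently than by a conservation law), and the preservation of strict convexity in $\mS^{n+1}$ along this particular inhomogeneous flow is a nontrivial claim that this paper nowhere establishes and that your sketch asserts without argument.
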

Some monotone quantities along the inverse curvature flow (\ref{x1.4}) can be found.
\begin{lemma}\label{l5.7}
	Suppose $M_t$ is a smooth, closed, strictly convex solution to the flow (\ref{x1.4}) with $F=\frac{p_k}{p_{k-1}}$ in sphere. Then the following hold:
	
	$(\rmnum1)$ $W_{-1}^{\phi'}$ is monotone increasing and $W_{-1}^{\phi'}$ is a constant function if and only if $M_t$ is a slice for each $t$;
	
	$(\rmnum2)$ $W_m$ is monotone decreasing with $1\le k\le m\le n$ and $W_m$ is a constant function if and only if $M_t$ is a slice for each $t$;
	
	$(\rmnum3)$ $\int_{M_t}\Phi p_md\mu_t+mW_{m-1}(\Om_t)$ is monotone decreasing with $1\le k\le m\le n$ and $\int_{M_t}\Phi p_md\mu+mW_{m-1}(\Om_t)$ is a constant function if and only if $M_t$ is a slice for each $t$.
\end{lemma}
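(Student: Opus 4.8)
The plan is to differentiate each of the three quantities along (\ref{x1.4}) and read off its sign from the Newton--MacLaurin inequalities (Lemma \ref{l2.5}) together with a weighted integration by parts against the divergence-free Newton tensors $p_\ell^{ij}:=\p p_\ell/\p h_{ij}$, using the sphere-specific relation $\nn\phi'=-\nn\Phi$. Throughout one uses that, $M_t$ being strictly convex, it is contained in an open hemisphere about the origin (cf.\ \cite{CS,BGL}), so $u>0$, $\phi'>0$, $0\le\Phi<1$, and $p_\ell^{ij}$ is positive definite for $1\le\ell\le n$. Abbreviate the flow speed $\sF=\tfrac1F-\tfrac u{\phi'}=\tfrac{p_{k-1}}{p_k}-\tfrac u{\phi'}$, and recall $p_\ell^{ij}\nn_i\nn_j\Phi=\ell(\phi'p_{\ell-1}-up_\ell)$.

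For $(\rmnum1)$, formula (\ref{1.6}) with index $0$ gives $\tfrac1{n+1}\tfrac d{dt}W_{-1}^{\phi'}(\Om_t)=\int_{M_t}\big(\tfrac{\phi'p_{k-1}}{p_k}-u\big)\,d\mu_t$. By the Newton--MacLaurin chain $\tfrac{p_{k-1}}{p_k}\ge\tfrac{p_{k-2}}{p_{k-1}}\ge\cdots\ge\tfrac1{p_1}=\tfrac nH$, the integrand is $\ge\tfrac{n\phi'}H-u$, and Brendle's weighted Heintze--Karcher inequality \cite{BS} gives $\int_{M_t}\tfrac{n\phi'}H\,d\mu_t\ge(n+1)\int_{\Om_t}\phi'\,d\text{vol}=\int_{M_t}u\,d\mu_t=W_{-1}^{\phi'}(\Om_t)$. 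Hence $\tfrac d{dt}W_{-1}^{\phi'}\ge0$; constancy in $t$ forces equality in the Newton--MacLaurin chain and in Heintze--Karcher, i.e.\ $M_t$ is a geodesic slice.

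For $(\rmnum2)$ and $(\rmnum3)$ the engine is a weighted Minkowski estimate: for a positive function $\psi$ on $M_t$, integration by parts and $\nn_ip_\ell^{ij}=0$ give $\int_{M_t}\tfrac\psi{\phi'}p_\ell^{ij}\nn_i\nn_j\Phi\,d\mu_t=-\int_{M_t}\nn_i\big(\tfrac\psi{\phi'}\big)p_\ell^{ij}\nn_j\Phi\,d\mu_t$, and for $\psi\in\{1,u,\Phi\}$ — using $\nn_iu=h_i^k\nn_k\Phi$ (Lemma \ref{l2.2}) and $\nn\phi'=-\nn\Phi$ — the right side is a sum of terms $-\int(\,\cdot\,)\,p_\ell^{ij}\nn_i\Phi\nn_j\Phi$ and (only for $\psi=u$) $-\int(\,\cdot\,)\,p_\ell^{ij}h_i^k\nn_k\Phi\nn_j\Phi$ with nonnegative coefficients, hence $\le0$ by strict convexity. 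Then for $(\rmnum2)$: $\tfrac d{dt}W_m=\tfrac{n+1-m}{n+1}\int_{M_t}p_m\sF$, and since $k\le m$ Newton--MacLaurin gives $p_m\sF\le p_{m-1}-\tfrac{up_m}{\phi'}=\tfrac1{m\phi'}p_m^{ij}\nn_i\nn_j\Phi$, so the case $\psi=1$ yields $\tfrac d{dt}W_m\le0$. For $(\rmnum3)$: by (\ref{x5.1}), $\tfrac d{dt}\big(\int_{M_t}\Phi p_m+mW_{m-1}\big)=\int_{M_t}\big((m+1)up_m+(n-m)\Phi p_{m+1}\big)\sF$; bounding $up_m\sF\le up_{m-1}-\tfrac{u^2p_m}{\phi'}=\tfrac1{m\phi'}\,u\,p_m^{ij}\nn_i\nn_j\Phi$ and $\Phi p_{m+1}\sF\le\Phi p_m-\tfrac{\Phi up_{m+1}}{\phi'}=\tfrac1{(m+1)\phi'}\,\Phi\,p_{m+1}^{ij}\nn_i\nn_j\Phi$ via Newton--MacLaurin (the second term being vacuous when $m=n$), the cases $\psi=u$ and $\psi=\Phi$ give $\tfrac d{dt}\big(\int_{M_t}\Phi p_m+mW_{m-1}\big)\le0$.

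For rigidity in $(\rmnum2)$ and $(\rmnum3)$, constancy in $t$ forces equality in every Newton--MacLaurin step (so $M_t$ is totally umbilic) and $p_\ell^{ij}\nn_i\Phi\nn_j\Phi\equiv0$, whence $\nn\Phi\equiv0$ by positive definiteness, so $\rho$ is constant and $M_t$ is a geodesic slice. The genuinely delicate points are (a) knowing $\phi'>0$ along the flow, i.e.\ that a closed strictly convex hypersurface in $\mS^{n+1}$ lies in an open hemisphere, and (b) the Heintze--Karcher step in $(\rmnum1)$, which must be imported from the warped-product framework of \cite{BS}; the remainder is routine bookkeeping with the Newton--MacLaurin inequalities and the divergence-free Newton tensors.
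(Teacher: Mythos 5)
Your proposal is correct and follows essentially the same route as the paper's proof: differentiate via (\ref{1.6}) and (\ref{x5.1}), reduce with the Newton--MacLaurin inequalities to terms of the form $\phi'p_{m-1}-up_m=\tfrac1m p_m^{ij}\nn_i\nn_j\Phi$, integrate by parts against the divergence-free Newton tensors using $\nn\phi'=-\nn\Phi$ and strict convexity, and invoke Brendle's Heintze--Karcher inequality for part $(\rmnum1)$. The only differences are cosmetic (the paper combines the $\psi=\Phi$ coefficients via $\phi'+\Phi=1$, while you only use their nonnegativity), and your explicit remarks on $\phi'>0$ and the rigidity step are consistent with, indeed slightly more detailed than, the paper's argument.
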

\begin{proof}
	(\rmnum{1}): Similar to Lemma \ref{l5.6},
	\begin{align*}
		\frac{1}{n+1}\frac{d}{dt}W_{-1}^{\phi'}=&\int_{M_t}\phi'\(\frac{p_{k-1}}{p_k}-\frac{u}{\phi'}\)d\mu_t\\
		\ge&\int_{M_t}(\frac{\phi'}{p_1}-u)d\mu_t\ge0,
	\end{align*}
	where we have used the Newton-MacLaurin inequalities and  Brendle's Heintze-Karcher-type inequality \cite{BS}. Thus
	$W_{-1}^{\phi'}$ is a constant function if and only if  $W_{-1}^{\phi'}$ is a slice.
	
	(\rmnum{2}): For $k\le m\le n$,
	\begin{align*}
		\frac{n+1}{n+1-m}\frac{d}{dt}W_m=&\int_{M_t}\frac{1}{\phi'}\(\phi'\frac{p_{k-1}p_m}{p_k}-up_m\)d\mu_t\\
		\le&\int_{M_t}\frac{1}{\phi'}\(\phi'p_{m-1}-up_m\)d\mu_t\\
		=&\frac{1}{m}\int_{M_t}\frac{p_m^{ii}}{\phi'}\nn_{ii}^2\Phi d\mu_t\\
		=&\frac{1}{m}\int_{M_t}-\frac{p_m^{ii}}{\phi'^2}|\nn_i\Phi|^2d\mu_t\le0,
	\end{align*}
	where we used the Newton-MacLaurin inequalities and $\nn\phi'=-\nn\Phi'$ in sphere. Thus
	$W_m$ is a constant function if and only if  $M_t$ is a slice.
	
	(\rmnum{3}): By (\ref{x5.1}),
	\begin{align*}
		&\frac{d}{dt}\(\int_{M_t}\Phi p_md\mu_t+mW_{m-1}(\Om_t)\)\\
		=&\int_{M_t}\((m+1)up_m+(n-m)\Phi p_{m+1}\)\(\frac{p_{k-1}}{p_k}-\frac{u}{\phi'}\) d\mu_t\\
		=&(m+1)\int_{M_t}\frac{u}{\phi'}\(\phi'\frac{p_{k-1}p_m}{p_k}-up_m\)d\mu_t+(n-m)\int_{M_t}\frac{\Phi}{\phi'}\(\phi'\frac{p_{k-1}p_{m+1}}{p_k}-up_{m+1}\)d\mu_t\\
		\le&(m+1)\int_{M_t}\frac{u}{\phi'}\(\phi'p_{m-1}-up_{m}\)d\mu_t+(n-m)\int_{M_t}\frac{\Phi}{\phi'}\(\phi'p_{m}-up_{m+1}\)d\mu_t\\
		=&\frac{m+1}{m}\int_{M_t}\frac{u}{\phi'}p_{m}^{ij}\nn_{ij}^2\Phi d\mu_t+\frac{n-m}{m+1}\int_{M_t}\frac{\Phi}{\phi'} p_{m+1}^{ij}\nn_{ij}^2\Phi d\mu_t\\
		=&-\(\frac{m+1}{m}\int_{M_t}p_{m}^{ii}\frac{\phi'\k_i+u}{\phi'^2}(\nn_{i}\Phi)^2 d\mu_t+\frac{n-m}{m+1}\int_{M_t} \frac{p_{m+1}^{ii}}{\phi'^2}(\nn_{i}\Phi)^2d\mu_t\)\\
		\le&0,
	\end{align*}
	where we have used $M_t$ is strictly convex, Newton-MacLaurin inequalities, $\nn_i u=\k_i\nn_i\Phi$, $\nn^2_{ij}\Phi=\phi'g_{ij}-uh_{ij}$, $\nn\phi'=-\nn\Phi$ and $\phi'+\Phi=1$ in sphere. Thus
	$\int_{M_t}\Phi p_kd\mu+kW_{k-1}(\Om_t)$ is a constant function if and only if  $\int_{M_t}\Phi p_kd\mu+kW_{k-1}(\Om_t)$ is a slice.
\end{proof}

By Lemmas \ref{l5.6} and \ref{l5.7}, we can derive Theorems \ref{t5.7} and \ref{t5.9} directly. We give the proof of  (\ref{5.9}) as an example.

\begin{proof of (1.13)}
	Denote the limit slice by $S_{r_\infty}=\p\Om_\infty$. Then
	\begin{align*}
		\int_{\p\Om}\Phi p_kd\mu+kW_{k-1}(\Om)\ge&\lim_{t\rightarrow\infty}\(\int_{\p\Om_t}\Phi p_kd\mu+kW_{k-1}(\Om_t)\)\\
		=&\int_{\p\Om_\infty}\Phi p_kd\mu+kW_{k-1}(\Om_\infty)=(\xi_k+kf_{k-1})(r_\infty),\\
		W_{-1}^{\phi'}(\Om)\le&\lim_{t\rightarrow\infty}W_{-1}^{\phi'}(\Om_t)=W_{-1}^{\phi'}(\Om_\infty)=h_{-1}(r_\infty).
	\end{align*}
	Since functions $\xi_k+kf_{k-1}$ and $h_l$ are strictly increasing, inequality (\ref{5.9}) holds. The rigidity of (\ref{5.9}) follows from the rigidity of the monotonicity. This completes this proof.
\end{proof of (1.13)}

Finally, we shall raise three questions that are well worth considered.
\begin{conjecture}
	Let $\Om$ be a strictly convex domain with smooth boundary $M$ in $\mS^{n+1}$. If $0\le k\le n$, $0\le l\le k$, there holds
\begin{equation}
	\int_{M}\Phi p_kd\mu+kW_{k-1}(\Om)\ge (\xi_k+kf_{k-1})\circ f_{l}^{-1}(W_{l}(\Om)).
\end{equation}
Equality holds if and only if $\Om$ is a geodesic ball centered at the origin.
\end{conjecture}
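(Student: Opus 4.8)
The plan is to run a locally constrained curvature flow preserving the quermassintegral $W_l(\Om)$ along which the mixed quantity $\int_{M}\Phi p_k\,d\mu+kW_{k-1}(\Om)$ is monotone, and then to conclude by convergence to a geodesic slice exactly as in the proof of Theorem \ref{t5.7}. Among the flows in (\ref{x1.4})$\sim$(\ref{x1.7}), the one preserving $W_l$ is (\ref{x1.7}) with the curvature quotient $F=p_{l+1}/p_l$, namely
\begin{equation*}
	\frac{\p}{\p t}X=\(\phi'-u\,\frac{p_{l+1}}{p_l}\)\nu,
\end{equation*}
because the Minkowski formula $\int_{M_t}\phi'p_l\,d\mu_t=\int_{M_t}u\,p_{l+1}\,d\mu_t$ gives $\frac{d}{dt}W_l(\Om_t)=\frac{n+1-l}{n+1}\int_{M_t}(\phi'-u\frac{p_{l+1}}{p_l})p_l\,d\mu_t=0$, and geodesic spheres are stationary. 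For $l=0$ this is exactly the flow (\ref{x1.7}) with $F=p_1$ used to prove (\ref{5.12}), while for $l\ge1$ the locally constrained inverse flow (\ref{1.3}) with index $l$ is an alternative $W_l$-preserving flow. Since $F=p_{l+1}/p_l$ is increasing on the cone $\Gamma_+$ of strictly convex tuples, which contains all principal curvature vectors of $\p\Om$, this flow is forward parabolic wherever $M_t$ remains strictly convex.

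Assuming strict convexity is preserved, the next step is to verify that $\int_{M_t}\Phi p_k\,d\mu_t+kW_{k-1}(\Om_t)$ is non-increasing along the flow. This is the computation in the proof of Lemma \ref{l5.6}$(\rmnum4)$, performed for the speed $\sF=\phi'-u\,p_{l+1}/p_l$ in place of $\phi'-up_1$: from the variational formula (\ref{x5.1}) one splits the right-hand side into its $up_k$ and $\Phi p_{k+1}$ contributions, replaces the step $p_1p_j\ge p_{j+1}$ by $\frac{p_{l+1}}{p_l}\,p_j\ge p_{j+1}$ (valid for $l\le j$ by Lemma \ref{l2.5}), uses $\nn_a\nn_b\Phi=\phi'g_{ab}-uh_{ab}$ to write $\phi'p_j-up_{j+1}=\frac{1}{j+1}p_{j+1}^{ab}\nn_a\nn_b\Phi$, and integrates by parts using the divergence-freeness of the Newton tensors together with $\nn_iu=\k_i\nn_i\Phi$; strict convexity then makes the remaining boundary terms nonpositive. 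As in the proof of (\ref{5.12}), this argument requires $k\le n-2$, and the cases $k=n-1,n$ would need a separate treatment: for instance, when $k=n$ one can use $\Phi+\phi'=1$ in $\mS^{n+1}$ to rewrite $\int_{M}\Phi p_n\,d\mu+nW_{n-1}(\Om)=\om_n-W_n^{\phi'}(\Om)$, so that the desired estimate becomes an upper bound for $W_n^{\phi'}(\Om)$ in the spirit of Theorem \ref{t5.9}.

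Once monotonicity and the convergence $M_t\to S_{r_\infty}$ are available, the conclusion is immediate: preservation of $W_l$ forces $r_\infty=f_l^{-1}(W_l(\Om))$, and monotonicity together with the strict monotonicity of $\xi_k+kf_{k-1}$ gives
\begin{equation*}
	\int_{M}\Phi p_k\,d\mu+kW_{k-1}(\Om)\ge(\xi_k+kf_{k-1})(r_\infty)=(\xi_k+kf_{k-1})\circ f_l^{-1}(W_l(\Om)),
\end{equation*}
with equality forcing the flow to be stationary from $t=0$, i.e.\ $\Om$ a geodesic ball centred at the origin.

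The main obstacle is precisely the long-time existence and exponential convergence of $\frac{\p}{\p t}X=(\phi'-u\,p_{l+1}/p_l)\nu$ for strictly convex hypersurfaces in $\mS^{n+1}$ when $l\ge1$; at present this is only known for $l=0$ ($F=p_1$), exactly as (\ref{1.3}) is understood for $k=n$ in all space forms but for general $k$ in $\mH^{n+1}$ only under $h$-convexity. One would need the $C^0$-estimate (from star-shapedness, as in Lemma \ref{l3.1}), a gradient estimate, the preservation of strict convexity, and a curvature bound; as the treatment of (\ref{1.12}) in Section 3 already indicates, for such quotient-type constrained flows the $C^2$-estimate is the delicate point and seems to require a new auxiliary function built from the second fundamental form rather than from $F$. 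This route genuinely cannot be replaced by chaining known inequalities, since in $\mS^{n+1}$ the quermassintegral comparison runs the wrong way: Lemma \ref{l5.6} yields $W_l\ge f_l\circ f_0^{-1}(W_0)$, not $W_0\ge f_0\circ f_l^{-1}(W_l)$, so (\ref{5.12}) cannot be combined with a $W_0$-versus-$W_l$ estimate to produce the conjectured inequality.
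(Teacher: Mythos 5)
There is a genuine gap here, and in fact the statement you are addressing is stated in the paper only as a \emph{conjecture}: the paper offers no proof of it, and your proposal does not close the distance either. Your strategy (run the $W_l$-preserving flow $\p_tX=(\phi'-u\,p_{l+1}/p_l)\nu$, show $\int_{M_t}\Phi p_k\,d\mu_t+kW_{k-1}(\Om_t)$ is non-increasing, and pass to the limit slice) is the natural one and is indeed how the paper proves the special case $l=0$, $0\le k\le n-2$ (inequality (\ref{5.12}) via the flow (\ref{x1.7}) with $F=p_1$ and Lemma \ref{l5.6}). But the two ingredients that would turn this outline into a proof are exactly the ones that are missing. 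First, for $l\ge1$ the long-time existence, exponential convergence to a slice, and preservation of strict convexity of this quotient-type constrained flow in $\mS^{n+1}$ are not known; you acknowledge this yourself, and it cannot be waved away, since without convergence the limit comparison $(\xi_k+kf_{k-1})(r_\infty)$ with $f_l(r_\infty)=W_l(\Om)$ never gets off the ground. The paper's Section 3 analysis of the analogous flow (\ref{x1.5}) in $\mH^{n+1}$ shows how delicate the $C^2$ estimate already is in the hyperbolic setting, and no analogue is provided (or cited) in the sphere for $l\ge1$.

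Second, even granting convergence, your monotonicity argument only works for $k\le n-2$, for the same reason as in Lemma \ref{l5.6}: after the Newton--MacLaurin step the term coming from $(n-k)\Phi p_{k+1}\sF$ must be absorbed using $p_{k+2}$, and for $k=n-1$ it degenerates to $\int\Phi\phi'p_n\,d\mu\ge0$ with the wrong sign, while for $k=n$ your proposed reduction via $\Phi+\phi'=1$ turns the claim into an upper bound for $W_n^{\phi'}(\Om)$ in terms of $W_l(\Om)$, which is itself not among the proved inequalities (Theorem \ref{t5.9} bounds $W_m$ from below by a function of $W_{-1}^{\phi'}$, not $W_n^{\phi'}$ from above by a function of $W_l$). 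So the cases $k=n-1,n$ and all $l\ge1$ remain open, which is precisely why the statement is posed as a conjecture. Your closing observation that the inequality cannot be obtained by chaining (\ref{5.12}) with a $W_0$-versus-$W_l$ comparison (because in $\mS^{n+1}$ the available comparison $W_l\ge f_l\circ f_0^{-1}(W_0)$ runs the wrong way) is correct and worth keeping, but as it stands your text is a research program, not a proof.
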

\begin{conjecture}
	Let $\Om$ be a strictly convex domain with smooth boundary $M$ in $\mS^{n+1}$. If $0\le k\le n$, $0\le l\le k$, there holds
	\begin{equation}
		\int_{M}\Phi p_kd\mu+kW_{k-1}(\Om)\ge (\xi_k+kf_{k-1})\circ h_{l}^{-1}(W_{l}^{\phi'}(\Om)).
	\end{equation}
	Equality holds if and only if $\Om$ is a geodesic ball centered at the origin.
\end{conjecture}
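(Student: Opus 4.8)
The plan is to imitate the proof of Theorem~\ref{t5.7}: to find, for a strictly convex domain $\Om\subset\mS^{n+1}$, a locally constrained curvature flow $X:M^n\times[0,\infty)\to\mS^{n+1}$ with $X_0(M)=\p\Om$ along which
\begin{equation*}
\int_{M_t}\Phi p_k\,d\mu_t+kW_{k-1}(\Om_t)\ \text{is non-increasing}\quad\text{and}\quad W_l^{\phi'}(\Om_t)\ \text{is non-decreasing},
\end{equation*}
and which converges exponentially to a geodesic slice $S_{r_\infty}=\p\Om_\infty$. Once such a flow is available the conclusion is automatic: the two monotonicities give $\int_M\Phi p_k\,d\mu+kW_{k-1}(\Om)\ge(\xi_k+kf_{k-1})(r_\infty)$ and $W_l^{\phi'}(\Om)\le h_l(r_\infty)$, so $r_\infty\ge h_l^{-1}(W_l^{\phi'}(\Om))$, and since $\xi_k+kf_{k-1}$ is strictly increasing the conjectured inequality follows; rigidity is inherited from the rigidity in the monotonicity formulas, which forces every $M_t$, hence $\Om$, to be a geodesic ball.

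The natural first candidate is the locally constrained inverse curvature flow~\eqref{x1.4} with $F=p_k/p_{k-1}$, whose long-time existence, preservation of strict convexity and exponential convergence to a geodesic slice from strictly convex data are supplied by \cite{SX}; along it Lemma~\ref{l5.7}(iii), with $m=k$, already yields that $\int_{M_t}\Phi p_k\,d\mu_t+kW_{k-1}(\Om_t)$ is non-increasing. Thus the whole difficulty is concentrated in showing $W_l^{\phi'}(\Om_t)$ is non-decreasing for $0\le l\le k$. By~\eqref{1.6} with $K=1$ and $\sF=\dfrac{p_{k-1}}{p_k}-\dfrac{u}{\phi'}$ this reads
\begin{equation*}
\frac{d}{dt}W_l^{\phi'}(\Om_t)=\int_{M_t}\Big((n-l)\phi'p_{l+1}-(l+1)up_l\Big)\Big(\dfrac{p_{k-1}}{p_k}-\dfrac{u}{\phi'}\Big)d\mu_t,
\end{equation*}
and I would try to make this nonnegative by splitting off the $\phi'p_{l+1}$ part — which is $\ge0$ after using the Newton--MacLaurin inequality $p_{k-1}p_{l+1}\ge p_kp_l$ (Lemma~\ref{l2.5}, valid for $l<k$) together with the Minkowski formula $\int\phi'p_l=\int up_{l+1}$ (Proposition~\ref{xp}) — leaving the remainder $-(l+1)\int up_l\big(\tfrac{p_{k-1}}{p_k}-\tfrac{u}{\phi'}\big)$ to be controlled; the sphere identities $\phi'+\Phi=1$ and $\nn\phi'=-\nn\Phi$ would recombine the gradient terms as in Lemmas~\ref{l5.6} and~\ref{l5.7}. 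The corner case $k=0$ (which forces $l=0$) would instead be run with flow~\eqref{x1.7}, $F=p_1$, using the computations behind Lemma~\ref{l5.6}.

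The hard part will be that remainder integral. On a geodesic slice the integrand $up_l\big(\tfrac{p_{k-1}}{p_k}-\tfrac{u}{\phi'}\big)$ vanishes identically (all speeds vanish there), so the desired estimate $\int up_l\big(\tfrac{p_{k-1}}{p_k}-\tfrac{u}{\phi'}\big)\le0$ would be sharp — yet it does not follow from the pointwise Newton--MacLaurin inequalities (which only give $\tfrac{p_{k-1}}{p_k}\ge\tfrac1{p_1}$, not a bound against $\tfrac{u}{\phi'}$), nor from the unweighted Heintze--Karcher inequality $\int\big(\tfrac{\phi'}{p_1}-u\big)\ge0$ of \cite{BS}; one seems to need a \emph{weighted} Heintze--Karcher-type inequality, or a different flow. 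The diagonal case $l=k$ is worse still: there the relevant Newton--MacLaurin inequality is $p_{k-1}p_{k+1}\le p_k^2$, pointing the wrong way, so flow~\eqref{x1.4} with $F=p_k/p_{k-1}$ is almost certainly inadequate and a fresh locally constrained flow — a variant of~\eqref{x1.6} or~\eqref{x1.7} with a curvature quotient $F=p_j/p_{j-1}$ tuned so that both quantities evolve correctly — would be required, together with its own a priori estimates; as with~\eqref{x1.6}, establishing the $C^2$-estimate for such a flow, presumably via an auxiliary function built from the second fundamental form as in the proof of Theorem~\ref{t2}, would be the principal technical obstacle. An alternative route that avoids constructing any new flow would be to combine the proven Theorem~\ref{t5.7} with a reverse weighted inequality $W_l^{\phi'}(\Om)\le h_l\circ h_{-1}^{-1}(W_{-1}^{\phi'}(\Om))$ for strictly convex $\Om\subset\mS^{n+1}$ — an inverse-type inequality in the spirit of Theorem~\ref{t5.9}, to be proved by a flow along which $W_{-1}^{\phi'}$ is preserved (or decreases) while $W_l^{\phi'}$ increases — but this reverse inequality looks no easier.
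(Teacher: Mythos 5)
There is a genuine gap, and it is worth being explicit about its nature: the statement you are proving is not proved in the paper at all --- it is stated there as a \emph{conjecture}, precisely because the authors' machinery only reaches the cases on the right-hand side involving $W_{-1}^{\phi'}$ and $W_0$ (Theorem \ref{t5.7}), not $W_l^{\phi'}$ for $0\le l\le k$. Your proposal correctly reduces the conjecture to producing a flow along which $\int_{M_t}\Phi p_k\,d\mu_t+kW_{k-1}(\Om_t)$ is non-increasing while $W_l^{\phi'}(\Om_t)$ is non-decreasing, and correctly observes that Lemma \ref{l5.7} (for the flow \eqref{x1.4} with $F=p_k/p_{k-1}$, convergence by \cite{SX}) supplies the first monotonicity and only the $l=-1$ instance of the second, while Lemma \ref{l5.6} (flow \eqref{x1.7}) covers $W_{-1}^{\phi'}$ and $W_0$. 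But the decisive step --- nonnegativity of $\frac{d}{dt}W_l^{\phi'}(\Om_t)$ computed from \eqref{1.6} along \eqref{x1.4}, i.e.\ control of the remainder term $\int_{M_t}up_l\bigl(\frac{p_{k-1}}{p_k}-\frac{u}{\phi'}\bigr)d\mu_t$ --- is never established. As you yourself note, the pointwise Newton--MacLaurin inequalities of Lemma \ref{l2.5} only give $\frac{p_{k-1}}{p_k}\ge\frac{1}{p_1}$ and point the wrong way entirely in the diagonal case $l=k$, the unweighted Heintze--Karcher inequality of \cite{BS} does not apply to the weighted integrand, and the hypothesized ``weighted Heintze--Karcher-type inequality'' or the alternative reverse inequality $W_l^{\phi'}(\Om)\le h_l\circ h_{-1}^{-1}(W_{-1}^{\phi'}(\Om))$ are not proved anywhere (nor are they available in the literature cited). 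Likewise, the ``fresh locally constrained flow'' you invoke for $l=k$ is not constructed, and its a priori estimates (in particular the $C^2$ estimate, which the paper emphasizes is already problematic for the closely related flow \eqref{x1.6}) are not supplied.

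In short, what you have written is an accurate diagnosis of exactly the obstruction that makes this a conjecture rather than a theorem, together with a plausible program; it is not a proof. The top-level reduction (two monotone quantities plus convergence to a slice, then compare with $\xi_k+kf_{k-1}$ and $h_l$ evaluated at $r_\infty$, rigidity from rigidity of the monotonicity) is the same scheme the paper uses for Theorems \ref{t5.7} and \ref{t5.9}, but the missing monotonicity of $W_l^{\phi'}$ for $0\le l\le k$ is the entire content of the conjecture, and it remains open in your argument just as it does in the paper.
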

The third question is as follows: How to derive the $C^2$ estimates of flow (\ref{x1.7}) with $F=p_k^\frac{1}{k}$ for strictly $k$-convex initial hypersurface in sphere? If we solve this problem and derive the lower bound of $F$, we can derive (\ref{5.15}) for strictly $m$-convex domains.

\section{Reference}
\begin{biblist}




\bib{ACW}{article}{
	author={Andrews B.},
	author={Chen X.},
	author={Wei Y.},
	title={Volume preserving flow and Alexandrov-Fenchel type inequalities in
		hyperbolic space},
	journal={J. Eur. Math. Soc. (JEMS)},
	volume={23},
	date={2021},
	number={7},
	pages={2467--2509},
	issn={1435-9855},
	review={\MR{4269419}},
	doi={10.4171/jems/1059},
}


\bib{AHL}{article}{
	author={Andrews B.},
	author={Hu Y.},
	author={Li H.},
	title={Harmonic mean curvature flow and geometric inequalities},
	journal={Adv. Math.},
	volume={375},
	date={2020},
	pages={107393, 28},
	issn={0001-8708},
	review={\MR{4170217}},
	doi={10.1016/j.aim.2020.107393},
}

\bib{AMZ}{article}{
	author={Andrews B.},
	author={McCoy J.},
	author={Zheng Y.},
	title={Contracting convex hypersurfaces by curvature},
	journal={Calc. Var. Partial Differential Equations},
	volume={47},
	date={2013},
	number={3-4},
	pages={611--665},
	issn={0944-2669},
	review={\MR{3070558}},
	doi={10.1007/s00526-012-0530-3},
}

\bib{BLO}{article}{
	author={Barbosa J. Lucas M.},
	author={Lira Jorge H. S.},
	author={Oliker Vladimir I.},
	title={A priori estimates for starshaped compact hypersurfaces with
		prescribed $m$th curvature function in space forms},
	conference={
		title={Nonlinear problems in mathematical physics and related topics,
			I},
	},
	book={
		series={Int. Math. Ser. (N. Y.)},
		volume={1},
		publisher={Kluwer/Plenum, New York},
	},
	date={2002},
	pages={35--52},
	review={\MR{1970603}},
}

\bib{BS}{article}{
	author={Brendle S.},
	title={Constant mean curvature surfaces in warped product manifolds},
	journal={Publ. Math. Inst. Hautes \'{E}tudes Sci.},
	volume={117},
	date={2013},
	pages={247--269},
	issn={0073-8301},
	review={\MR{3090261}},
	doi={10.1007/s10240-012-0047-5},
}


\bib{BGL}{article}{
	author={Brendle S.},
	author={Guan P.},
	author={Li J.},
	title={An inverse curvature type hypersurface flow in space forms (\textbf{preprint})},
		date={2018},
}

\bib{BHW}{article}{
	author={Brendle S.},
	author={Hung P.-K.},
	author={Wang M.-T.},
	title={A Minkowski inequality for hypersurfaces in the anti--de
		Sitter--Schwarzschild manifold},
	journal={Comm. Pure Appl. Math.},
	volume={69},
	date={2016},
	number={1},
	pages={124--144},
	issn={0010-3640},
	review={\MR{3433631}},
	doi={10.1002/cpa.21556},
}

\bib{BFN}{article}{
	author={Bucur D.},
	author={Ferone V.},
	author={Nitsch C.},
	author={Trombetti C.},
	title={Weinstock inequality in higher dimensions},
	journal={J. Differential Geom.},
	volume={118},
	date={2021},
	number={1},
	pages={1--21},
	issn={0022-040X},
	review={\MR{4255070}},
	doi={10.4310/jdg/1620272940},
}

\bib{CGL}{article}{
	author={Chen C.},
	author={Guan P.},
	author={Li J.},
	author={Scheuer J.},
	title={A fully-nonlinear flow and quermassintegral inequalities in the
		sphere},
	journal={Pure Appl. Math. Q.},
	volume={18},
	date={2022},
	number={2},
	pages={437--461},
	issn={1558-8599},
	review={\MR{4429215}},
}

\bib{CS}{article}{
	author={Chen M.},
	author={Sun J.},
	title={Alexandrov-Fenchel type inequalities in the sphere},
	journal={Adv. Math.},
	volume={397},
	date={2022},
	pages={Paper No. 108203, 25},
	issn={0001-8708},
	review={\MR{4366854}},
	doi={10.1016/j.aim.2022.108203},
}


\bib{DG}{article}{
	author={de Lima L. L.},
	author={Gir\~{a}o F.},
	title={An Alexandrov-Fenchel-type inequality in hyperbolic space with an
		application to a Penrose inequality},
	journal={Ann. Henri Poincar\'{e}},
	volume={17},
	date={2016},
	number={4},
	pages={979--1002},
	issn={1424-0637},
	review={\MR{3472630}},
	doi={10.1007/s00023-015-0414-0},
}

\bib{DL}{article}{
	author={Ding S.},
	author={Li G.},
	title={A class of curvature flows expanded by support function and
		curvature function in the Euclidean space and hyperbolic space},
	journal={J. Funct. Anal.},
	volume={282},
	date={2022},
	number={3},
	pages={Paper No. 109305, 38},
	issn={0022-1236},
	review={\MR{4339010}},
	doi={10.1016/j.jfa.2021.109305},
}







\bib{GC2}{book}{
	author={Gerhardt C.},
	title={Curvature problems},
	series={Series in Geometry and Topology},
	volume={39},
	publisher={International Press, Somerville, MA},
	date={2006},
	pages={x+323},
	isbn={978-1-57146-162-9},
	isbn={1-57146-162-0},
	review={\MR{2284727}},
}

\bib{GP}{article}{
	author={Gir\~{a}o F.},
	author={Pinheiro N.},
	title={An Alexandrov-Fenchel-type inequality for hypersurfaces in the
		sphere},
	journal={Ann. Global Anal. Geom.},
	volume={52},
	date={2017},
	number={4},
	pages={413--424},
	issn={0232-704X},
	review={\MR{3735905}},
	doi={10.1007/s10455-017-9562-4},
}

\bib{GR}{article}{
	author={Gir\~{a}o F.},
	author={Rodrigues D.},
	title={Weighted geometric inequalities for hypersurfaces in sub-static
		manifolds},
	journal={Bull. Lond. Math. Soc.},
	volume={52},
	date={2020},
	number={1},
	pages={121--136},
	issn={0024-6093},
	review={\MR{4072037}},
	doi={10.1112/blms.12312},
}

\bib{GL2}{article}{
	author={Guan P.},
author={Li J.},
	title={The quermassintegral inequalities for $k$-convex starshaped
		domains},
	journal={Adv. Math.},
	volume={221},
	date={2009},
	number={5},
	pages={1725--1732},
	issn={0001-8708},
	review={\MR{2522433}},
	doi={10.1016/j.aim.2009.03.005},
}

\bib{GL}{article}{
	author={Guan P.},
	author={Li J.},
	title={A mean curvature type flow in space forms},
	journal={Int. Math. Res. Not. IMRN},
	date={2015},
	number={13},
	pages={4716--4740},
	issn={1073-7928},
	review={\MR{3439091}},
	doi={10.1093/imrn/rnu081},
}

\bib{HL}{article}{
	author={Hu Y.},
	author={Li H.},
	title={Geometric inequalities for hypersurfaces with nonnegative
		sectional curvature in hyperbolic space},
	journal={Calc. Var. Partial Differential Equations},
	volume={58},
	date={2019},
	number={2},
	pages={Paper No. 55, 20},
	issn={0944-2669},
	review={\MR{3916118}},
	doi={10.1007/s00526-019-1488-1},
}

\bib{HL2}{article}{
	author={Hu Y.},
author={Li H.},
	title={Geometric inequalities for static convex domains in hyperbolic
		space},
	journal={Trans. Amer. Math. Soc.},
	volume={375},
	date={2022},
	number={8},
	pages={5587--5615},
	issn={0002-9947},
	review={\MR{4469230}},
	doi={10.1090/tran/8628},
}

\bib{HL3}{article}{
	author={Hu Y.},
author={Li H.},
	title={Blaschke-Santal\'{o} type inequalities and quermassintegral
		inequalities in space forms},
	journal={Adv. Math.},
	volume={413},
	date={2023},
	pages={Paper No. 108826, 31},
	issn={0001-8708},
	review={\MR{4530617}},
	doi={10.1016/j.aim.2022.108826},
}

\bib{HLW}{article}{
	author={Hu Y.},
	author={Li H.},
	author={Wei Y.},
	title={Locally constrained curvature flows and geometric inequalities in hyperbolic space},
	journal={Math. Ann.},
	date={2020},
	doi={10.1007/s00208-020-02076-4},
}




\bib{HG}{article}{
   author={Huisken G.},
   title={Flow by mean curvature of convex surfaces into sphere},
   journal={J. Diff. Geom.},
   volume={20(1)}
   date={1984},
   pages={237-266},
}

\bib{HGC}{article}{
   author={Huisken G.},
   author={Sinestrari C.},
   title={Convexity estimates for mean curvature flow and singularities of mean convex surfaces},
   journal={Acta Math.},
   volume={183}
   date={1999},
   pages={45-70},
}

\bib{JL}{article}{
	author={Jin Q.},
	author={Li Y.},
	title={Starshaped compact hypersurfaces with prescribed $k$-th mean
		curvature in hyperbolic space},
	journal={Discrete Contin. Dyn. Syst.},
	volume={15},
	date={2006},
	number={2},
	pages={367--377},
	issn={1078-0947},
	review={\MR{2199434}},
	doi={10.3934/dcds.2006.15.367},
}


\bib{KNV}{book}{
  author={Krylov N. V.},
     title= {Nonlinear elliptic and parabolic quations of the second order},
 publisher={D. Reidel Publishing Co., Dordrecht},
     date={1987. xiv+462pp},

}

\bib{KM}{article}{
	author={Kwong K.},
	author={Miao P.},
	title={A new monotone quantity along the inverse mean curvature flow in
		$\mathbb{R}^n$},
	journal={Pacific J. Math.},
	volume={267},
	date={2014},
	number={2},
	pages={417--422},
	issn={0030-8730},
	review={\MR{3207590}},
	doi={10.2140/pjm.2014.267.417},
}

\bib{KM2}{article}{
	author={Kwong K.},
	author={Miao P.},
	title={Monotone quantities involving a weighted $\sigma_k$ integral along
		inverse curvature flows},
	journal={Commun. Contemp. Math.},
	volume={17},
	date={2015},
	number={5},
	pages={1550014, 10},
	issn={0219-1997},
	review={\MR{3404749}},
	doi={10.1142/S0219199715500145},
}

\bib{KW}{article}{
	author={Kwong K.},
	author={Wei Y.},
	title={Geometric inequalities involving three quantities in warped product manifolds},
	pages={arXiv:2303.00930},
}

\bib{LWX}{article}{
	author={Li H.},
	author={Wei Y.},
	author={Xiong C.},
	title={A geometric inequality on hypersurface in hyperbolic space},
	journal={Adv. Math.},
	volume={253},
	date={2014},
	pages={152--162},
	issn={0001-8708},
	review={\MR{3148549}},
	doi={10.1016/j.aim.2013.12.003},
}

\bib{LN}{book}{
  author={Nirenberg L.},
     title= {On a generalization of quasi-conformal mappings and its application to elliptic partial differential equations},
 publisher={Contributions to the theory of partial differential equations, Annals of Mathematics Studies},
     date={ Princeton University Press, Princeton, N. J.,1954, pp. 95C100.}
  }



\bib{R1}{article}{
	author={Reilly R.},
	title={On the Hessian of a function and the curvatures of its graph},
	journal={Michigan Math. J.},
	volume={20},
	date={1973},
	pages={373--383},
	issn={0026-2285},
	review={\MR{334045}},
}

\bib{R2}{article}{
	author={Reilly R.},
	title={Applications of the Hessian operator in a Riemannian manifold},
	journal={Indiana Univ. Math. J.},
	volume={26},
	date={1977},
	number={3},
	pages={459--472},
	issn={0022-2518},
	review={\MR{474149}},
	doi={10.1512/iumj.1977.26.26036},
}


\bib{SX}{article}{
	author={Scheuer J.},
	author={Xia C.},
	title={Locally constrained inverse curvature flows},
	journal={Trans. Amer. Math. Soc.},
	volume={372},
	date={2019},
	number={10},
	pages={6771--6803},
	issn={0002-9947},
	review={\MR{4024538}},
	doi={10.1090/tran/7949},
}

\bib{SR}{book}{
	author={Schneider R.},
	title={Convex bodies: the Brunn-Minkowski theory},
	series={Encyclopedia of Mathematics and its Applications},
	volume={151},
	edition={Second expanded edition},
	publisher={Cambridge University Press, Cambridge},
	date={2014},
	pages={xxii+736},
	isbn={978-1-107-60101-7},
	review={\MR{3155183}},
}

\bib{SG}{article}{
	author={Solanes G.},
	title={Integral geometry and the Gauss-Bonnet theorem in constant
		curvature spaces},
	journal={Trans. Amer. Math. Soc.},
	volume={358},
	date={2006},
	number={3},
	pages={1105--1115},
	issn={0002-9947},
	review={\MR{2187647}},
	doi={10.1090/S0002-9947-05-03828-6},
}

\bib{WX}{article}{
	author={Wang G.},
		author={Xia C.},
	title={Isoperimetric type problems and Alexandrov-Fenchel type inequalities in the hyperbolic space},
	journal={Adv. Math.},
	volume={259},
	date={2014},
	pages={532-556},
}

\bib{WZ}{article}{
	author={Wei Y.},
	author={Zhou T.},
	title={New weighted geometric inequalities for hypersurfaces in space
		forms},
	journal={Bull. Lond. Math. Soc.},
	volume={55},
	date={2023},
	number={1},
	pages={263--281},
	issn={0024-6093},
	review={\MR{4568341}},
}

\end{biblist}

\end{document}